\theoremstyle{definition}
\newtheorem{theorem}{Theorem}[section]
\newtheorem{remark}{Remark}[section]
\newtheorem{lemma}{Lemma}[section]
\newtheorem{definition}{Definition}[section]
\numberwithin{equation}{section}
\newtheoremstyle
{remarkstyle}
{}
{11pt}
{}
{}
{\bfseries}
{:}
{     }
{\thmname{#1} \thmnumber{#2} }
\theoremstyle{remarkstyle}
\def\ps@pprintTitle{%
	\let\@oddhead\@empty
	\let\@evenhead\@empty
	\let\@oddfoot\@empty
	\let\@evenfoot\@oddfoot
}
\begin{document}

\begin{frontmatter}

\title{Tempered space-time fractional negative binomial process}
%
%% use optional labels to link authors explicitly to addresses:
%\author[label1]{Shilpa}
%%\address[label1]{Department of Mathematics, Indian Instititue of Technology Indore, India}
%\author[label1]{Ashok Kumar Pathak \corref{cor1}}
%\ead{ashokiitb09@gmail.com}
%\address[label1]{Department of Mathematics and Statistics, Central University of Punjab, Bathinda, India}
%\author[label1]{Swaroop Georgy Zachariah}
%\address[label1]{Department of Mathematics and Statistics, }
%\cortext[cor1]{Corresponding author}

\author[add1]{Shilpa}
\ead{shilpa17garg@gmail.com}
\author[add1]{Ashok Kumar Pathak\corref{cor1}}
\ead{ashokiitb09@gmail.com}
\author[add2]{Aditya Maheshwari}
\ead{adityam@iimidr.ac.in}
\address[add1]{Department of Mathematics and Statistics, Central University of Punjab, Bathinda, India}
\address[add2]{Operations Management and Quantitative Techniques Area, Indian Institute of Management Indore, Indore, India}
\cortext[cor1]{Corresponding author}
\begin{abstract} 
In this paper, we define a tempered space-time fractional negative binomial process (TSTFNBP)  by subordinating the fractional Poisson process with an independent tempered Mittag-Leffler L\'{e}vy subordinator. %We call it Tempered space-time fractional negative binomial process (TSTFNBP).
We study its
distributional properties and its connection to
partial differential equations. We derive the asymptotic behavior of its fractional order moments and long-range dependence property. It is shown that the TSTFNBP exhibits overdispersion. We also obtain some results related to the first-passage time. 
\end{abstract}
  %\subjclass[MSC 2020]{ 60G22, 60G55.}	
\begin{keyword} 
Fractional Poisson process, Tempered Mittag-Leffler subordinator,  Fractional moments, LRD, PDEs.
%% PACS codes here, in the form: \PACS code \sep code

%% MSC codes here, in the form: \MSC code \sep code
2020 MSC: Primary 60G22, 60G51, Secondary 60G55, 60E05

\end{keyword}

\end{frontmatter}

%%
%% Start line numbering here if you want
%%
% \linenumbers

% \textcolor{red}{ To explore further
% \begin{enumerate}
%     \item Re-defining of the paper wrt the new name. 
%     \item discussing special cases such as (in this way we can pitch is as generalization)
%     \begin{enumerate}
%         \item If we put $\beta=1$, it reduces to tempered space-fractional negative binomial process (see \cite{maheshwari2023tempered})
%         \item if we put $\mu=0$, then it generalizes space-time fractional
%         negative binomial process.
%         \item if we put $\alpha=1,\mu=0$, then it becomes space fractional negative binomial process (see \cite{beghin2018space})
%     \end{enumerate}
%     \item* Two different approaches, namely, the ML L\'evy process approach and the repeated subordination approach meet here.
%     \item Inter-arrival time.
%     \item Levy measure for special case $\beta=1$.
% \end{enumerate}
% }

%% main text
\section{Introduction}
The applications of subordinated count processes in financial mathematics and actuarial sciences (see \cite{appl-act1,appl-fin2,appl-fin3}) have created an impetus for sustained enquiry in their theoretical research. In the last two decades, researchers have studied the fractional Poisson processes (FPPs) from a stochastic subordination point of view. In this direction,  we find several varieties of the  subordinated FPP being studied (see \cite{meerschaert2011fractional, orsingher2012space, maheshwari2019fractional, gupta2023fractional, meoli2023some,soni2024generalized}). It has  led to the development of rich literature in the domain. Several other practical applications of these processes can be found in various disciplines such as economics, finance, actuarial science,  physics,  infectious diseases modeling, and reliability (see \cite{doukhan2002theory, biard2014fractional, guler2022forecasting, di2023competing, Ritik2024}). 

Following the subordination approach, we narrow our attention to the negative binomial (NB) process due to its advantages for modelling overdispersed data. The NB process is a time-changed Poisson process delayed by an independent gamma subordinator. Recently, several fractional variants of the NB process have been explored; for example, \cite{samy2018fractional} introduced the fractional NB process (FNBP) by subordinating the FPP with an independent gamma subordinator and studied its governing partial differential equations (PDEs),  \cite{beghin2018space} considered a space fractional NB process (SFNBP) and used it in biological modeling. Moreover, several tempered stable extensions of the NB process have been proposed and studied in the literature. The tempered stable subordinator is obtained by exponential tempering of the stable process, which exhibits heavy tail behavior of a stable process at short times and lighter tails at large times,  additionally making all its moments finite. \cite{maheshwari2023tempered} defined a tempered space fractional negative binomial process (TSFNBP) and explored its distributional and long-range dependence (LRD) properties. Recently, \cite{Ritik2024} discussed applications of the tempered space fractional Poisson process to the reliability and bivariate shock models. This paper will consider a tempered space-time fractional negative binomial process using the stochastic subordination approach. In the following paragraph, we provide a brief description of the construction of the process and its important properties.

The tempered Mittag-Leffler L\'{e}vy subordinator (see \cite{kumar2019tempered}) is defined as a tempered stable subordinated model delayed by an independent gamma subordinator. Its distribution is semi-heavy tailed, and it is an important feature for studying extreme phenomena. As a result, it can be used in place of the gamma subordinator to construct several time-varying stochastic processes. 
In this paper, we introduce the FPP time-changed by the tempered Mittag-Leffler L\'evy process. We discuss several important characteristics of the process and derive various asymptotic results for the distributional properties and dependence structure.  In particular, we derive the probability mass function (pmf) and discuss its connections with PDEs. We derive the asymptotic behavior of its fractional order moments and examine its dependence properties. Some results related to the first-passage time are also explored. 

The structure of the article is as follows. In Section \ref{sec:prelims}, we present some preliminary notations, definitions, and results. In Section \ref{sec:tstfnbp}, we define the TSTFNBP and discuss its main distributional characteristics, and we derive the asymptotic behavior of its fractional order moments. In Section \ref{sec:lrd}, we study its dependence properties and some results related to the first-passage time.

\section{Preliminaries}\label{sec:prelims} This section introduces some notations, definitions, elementary distributions, and results that will be used in the following sections. Let $\mathbb{N}$, $\mathbb{R}$, and $\mathbb{C}$ denote the set of all natural, real, and complex numbers, respectively. Let $\mathbb{Z}_{+}=\mathbb{N}\cup \{0\}$ denotes the set of all non-negative integers.
\subsection{Definitions, some elementary distributions and results}\label{subsec:1}
\noindent (i)	Let $f :[a,b] \subset \mathbb{R}\longrightarrow\mathbb{R}$ be such that $f(t)$ is $(n+1)$ times continuous differentiable for $n < \tau <n+1$. Then, the Riemann-Liouville fractional derivative of order $\tau>0$ is defined as (see \cite{podlubny1999introduction})
\begin{equation*}
_aD^\tau_tf(t)=\bigg( \frac{d}{dt}\bigg)^{n+1}\int_{a}^{t}(t-u)^{n-\tau}f(u)du.
\end{equation*}
\noindent (ii)The generalized Wright function is defined by (\cite{kilbas2002generalized})
\begin{equation}\label{gwf11}
_p\psi_q \left[z\; \vline \;\begin{matrix}
\left(\alpha_i, \beta_i\right)_{1,p}\\
(a_j,b_j)_{1,q}
\end{matrix} \right] = \sum_{k=0}^{\infty} \frac{z^k}{k!} \frac{\prod_{i=1}^{p} \Gamma(\alpha_i + \beta_i k)}{\prod_{j=1}^{q}\Gamma(a_j + b_j k)},\;\;  z, \alpha_i, a_i \in \mathbb{C}\; \text{and}\; \beta_i, b_i \in \mathbb{R},
\end{equation}
\noindent (iii) For  $ 0 < \beta < 1$, let  $\{N_\beta(t, \lambda)\}_{t\geq 0}$ be a FPP having parameter $\lambda>0$. Its one-dimensional distributions are (see \cite{laskin2003fractional, meerschaert2011fractional})
\begin{equation*}
p_{\beta}(n/t, \lambda) =P[N_\beta(t, \lambda)=n] =\frac{(\lambda t^{\beta})^n}{n!} \sum_{k=0}^{\infty} \frac{(n+k)!}{k!}\frac{(-\lambda t^{\beta})^k}{\Gamma(\beta(k+n)+1)},\;\; n\in \mathbb{Z}_{+}.
\end{equation*}
\noindent (iv) Let $\Upsilon(t)\sim G(\lambda_1, \beta_1 t)$. Its probability density function (pdf) is given by 
\begin{equation*}
f_G(x,t) = \frac{\lambda_1^{\beta_1 t}}{\Gamma(\beta_1 t)}x^{\beta_1 t-1}e^{-\lambda_1 x}, \;\; x>0.
\end{equation*}
\noindent (v) For $\alpha\in (0,1)$ and $ \mu>0$, let $S_{\alpha,\mu}(t)$ be a tempered $\alpha$-stable subordinator (TSS). Then its pdf $g_{\alpha,\mu}(x,t)$  is given by (see \cite{rosinski2007tempering})
\begin{equation*}
	g_{\alpha,\mu}(x,t) = e^{-\mu x+\mu^{\alpha}t}g_{\alpha}(x,t),
\end{equation*}
where $g_{\alpha}(x,t)$ is the pdf of $\alpha$-stable subordinator (see \cite{kumar2015inverse}).\\
\noindent (vi) The tempered Mittag-Leffler L\'{e}vy process (TMLLP) $M_{\alpha,\beta_1,\lambda_1,\mu}(t)$  is obtained by subordinating TSS with an independent gamma subordinator as $M_{\alpha,\beta_1,\lambda_1,\mu}(t):= S_{\alpha,\mu}(G_{\lambda_1,\beta_1}(t)),\; \alpha\in(0,1),\lambda_1,\mu,\beta_1 >0, t\geq 0 $. Its pdf $f_{M_{\alpha,\beta_1,\lambda_1,\mu}(t)}$ is given by (see \cite{kumar2019tempered}) 

\begin{equation}\label{pmftm}
	f_{M_{\alpha,\beta_1,\lambda_1,\mu}}(t,x)={\lambda_1}^{\beta_1t}{e}^{-\mu x}\sum_{k=0}^{\infty } \frac{(-1)^k(\lambda_1-\mu^{\alpha})^k\Gamma(\beta_1 t+k) x^{\alpha(\beta_1t+k)-1}}{\Gamma (k+1)\Gamma(\beta_1 t)\Gamma(\alpha(\beta_1t+k) )},\hspace{0.3cm} \lambda_1> \mu^{\alpha },x> 0
\end{equation}
The L\'{e}vy measure density $\pi(x)$ and the Laplace transform (LT) of the TMLLP are respectively
\begin{equation}\label{ld1}
	\pi(x)= \frac{\alpha\beta_1}{x}e^{-\mu x}E_{\alpha, 1}\left[(\mu^{\alpha}-\lambda_1) x^\alpha\right],\; \lambda_1> \mu^{\alpha }, x> 0,
\end{equation}
where two parameters Mittag-Leffler function $E_{\alpha, \beta}(z)$ is defined as (see \cite{podlubny1999introduction})
\begin{equation}
E_{\alpha, \beta}(z) = \sum_{k=0}^{\infty} \frac{z^k}{\Gamma(\alpha k+\beta)}, \;\; \alpha, \beta> 0 
\end{equation}
and	
\begin{equation}\label{lt1}
		\mathbb{E}\left[e^{-uM_{\alpha,\beta_1,\lambda_1,\mu}(t)}\right] =\left(\frac{\lambda_1}{\lambda_1-\mu^{\alpha}+(\mu+u)^{\alpha}}\right)^{\beta_1t}.
\end{equation}

\section{Tempered space-time fractional negative binomial process}\label{sec:tstfnbp}
%\subsection{Definition and Properties:}

In this section, we define the TSTFNBP process and derive its distributional properties
\begin{definition}
    
Let $\{N_{\beta}(t,\lambda)\}_{t\geq 0}$ be an FPP with parameter $\lambda>0$. The tempered space-time fractional negative binomial process (TSTFNBP), denoted by $\{\mathcal{Q}_{\alpha ,\beta _1,\mu }^{\lambda _1,\beta }(t,\lambda)\}_{t\geq 0}$, which is  obtained by the subordination of the FPP with an independent TMLLP (\ref{subsec:1} (vi)), that is,
 \begin{equation*} 
 \mathcal{Q}_{\alpha ,\beta _1,\mu }^{\lambda _1,\beta }(t,\lambda):=	N_\beta(M_{\alpha,\beta_1,\lambda_1,\mu}(t),\lambda),\;\; t\geq0.
 \end{equation*}
\end{definition}

Let $\lambda_1>\mu^{\alpha}$ and $y>0$, the pmf of  $\{\mathcal{Q}_{\alpha ,\beta _1,\mu }^{\lambda _1,\beta }(t,\lambda)\}_{t \geq 0}$, denoted by $p_{\alpha ,\beta _1,\mu }^{\lambda _1,\beta }(n,t)=\mathbb{P}(\mathcal{Q}_{\alpha ,\beta _1,\mu }^{\lambda _1,\beta }(t,\lambda)=n)$ is derived as 
% \begin{align*}
% p_{\alpha ,\beta _1,\mu }^{\lambda _1,\beta }(n,t)
% &=\int_{0}^{\infty }p_{\beta }\left({n}/{y},\lambda \right) f_{M_{\alpha,\beta_1,\lambda_1,\mu}}(t,y)dy\\
% &=\int_{0}^{\infty}\left (\frac{(\lambda y^{\beta })^n}{n!}\sum_{k=0}^{\infty}\frac{(n+k)!}{k!} \frac{(-\lambda y^{\beta})^{k}}{\Gamma (\beta (n+k)+1)} \right )\left( {\lambda_1}^{\beta_1t}{e}^{-\mu y}\sum_{r=0}^{\infty } \frac{(-1)^r(\lambda_1-\mu^{\alpha})^r\Gamma(\beta_1 t+r) y^{\alpha(\beta_1t+r)-1}}{\Gamma (r+1)\Gamma(\beta_1 t)\Gamma(\alpha(\beta_1t+r) )} \right)dy\\
% &=\left ( \frac{\lambda ^n}{n!}\sum_{k=0}^{\infty}\frac{(n+k)!}{k!} \frac{(-\lambda)^ {k}}{\Gamma (\beta (n+k)+1)} \right )\left ( {\lambda_1}^{\beta_1t}\sum_{r=0}^{\infty } \frac{(-1)^r(\lambda_1-\mu^{\alpha})^r\Gamma(\beta_1 t+r)}{\Gamma (r+1)\Gamma(\beta_1 t)\Gamma(\alpha(\beta_1t+r) )} \right )\int_{0}^{\infty} {e}^{-\mu y}y^{\beta (n+k) +\alpha(\beta_1t+r) -1}   dy\\
% &=\frac{\lambda ^n{\lambda_1}^{\beta_1t}}{n!}\sum_{k=0}^{\infty}\frac{(n+k)!}{k!} \frac{(-\lambda)^ {k}}{\Gamma (\beta (n+k)+1)} \left ( \sum_{r=0}^{\infty } \frac{(-1)^r(\lambda_1-\mu^{\alpha})^r\Gamma(\beta_1 t+r)\Gamma (\beta (n+k) +\alpha(\beta_1t+r))}{\Gamma (r+1)\Gamma(\beta_1 t)\Gamma(\alpha(\beta_1t+r) )\mu ^{\beta (n+k) +\alpha(\beta_1t+r) }} \right ).
% \end{align*}
% \textcolor{red}{Adding special cases as mentioned above}\\
\begin{align*}
p_{\alpha ,\beta _1,\mu }^{\lambda _1,\beta }(n,t)
&=\int_{0}^{\infty }p_{\beta }\left({n}/{y},\lambda \right) f_{M_{\alpha,\beta_1,\lambda_1,\mu}}(t,y)dy\\
&=\int_{0}^{\infty}\left (\frac{(\lambda y^{\beta })^n}{n!}\sum_{k=0}^{\infty}\frac{(n+k)!}{k!} \frac{(-\lambda y^{\beta})^{k}}{\Gamma (\beta (n+k)+1)} \right )f_{M_{\alpha,\beta_1,\lambda_1,\mu}}(t,y)dy\\
&= \frac{\lambda ^n}{n!}\sum_{k=0}^{\infty}\frac{(n+k)!}{k!} \frac{(-\lambda)^ {k}}{\Gamma (\beta (n+k)+1)} \int_{0}^{\infty} y^{\beta (n+k) } f_{M_{\alpha,\beta_1,\lambda_1,\mu}}(t,y)  dy\\
&=\frac{\lambda ^n}{n!}\sum_{k=0}^{\infty}\frac{(n+k)!}{k!} \frac{(-\lambda)^ {k}}{\Gamma (\beta (n+k)+1)} \mathbb{E}\left[({M_{\alpha,\beta_1,\lambda_1,\mu}(t) })^{\beta(n+k)}\right],
\end{align*}
as $\mathbb{E}\left[({M_{\alpha,\beta_1,\lambda_1,\mu}(t) })^{\;\rho}\right] < \infty$ for all $\rho > 0$ (see \cite{kumar2019tempered}).
% \textcolor{blue}{at $\mu=0$ TMLLP transforms to MLLP \cite{kumar2019tempered} hence the pmf reduces to \begin{equation*}
% p_{\alpha ,\beta _1,0 }^{\lambda _1,\beta }(n,t)=\left ( \frac{\lambda ^n}{n!}\sum_{k=0}^{\infty}\frac{(n+k)!}{k!} \frac{(-\lambda)^ {k}}{\Gamma (\beta (n+k)+1)} \right )\mathbb{E}\left[({M_{\alpha,\beta_1,\lambda_1}(t) })^{\beta(n+k)}\right].
% \end{equation*} which is the pmf of GFNBP done in \cite{soni2024generalized}\\ at $\mu=0,\alpha=1$ the pmf of FNBP is obtained as done in \cite{samy2018fractional}}
\begin{remark} \noindent(i) When $\alpha =1,\mu=0$, the pmf of TSTFNBP reduces to  
		\begin{equation*}
		p_{1 ,\beta _1,0 }^{\lambda _1,\beta }(n,t)=\frac{\lambda^n}{\lambda_1^{\beta n} n! \Gamma(\beta_1 t)}\;  _2\psi_1 \left[\frac{-\lambda}{\lambda_1^{\beta}}\; \vline \;\begin{matrix}
\left(n+1,1\right),& \left(\beta_1 t + \beta n, \beta\right)\\
(1+\beta n, \beta)
\end{matrix} \right],
		\end{equation*}
		which is the pmf of the FNBP discussed in \cite{samy2018fractional}. In addition, for $\beta=1$, it leads to  the pmf of the NB$(\beta_1 t, \frac{\lambda}{\lambda_1+\lambda})$ of the form
		\begin{equation*}
		p_{1 ,\beta _1,0 }^{\lambda _1,1 }(n,t)=\binom{n+\beta _{1}t-1}{n}\left ( \frac{\lambda _1}{\lambda _1+\lambda } \right )^{\beta _{1}t}\left ( \frac{\lambda }{\lambda _1+\lambda } \right )^{n} ,
		\end{equation*}
		as discussed in \cite{samy2018fractional} .\\
   \noindent(ii) When $\mu=0$, the TSTFNBP corresponds to the generalized fractional negative binomial process  defined in \cite{soni2024generalized}.
  \end{remark}
  Next, we obtained the governing fractional PDE for the pmf of TSTFNBP  $\{\mathcal{Q}_{\alpha ,\beta _1,\mu }^{\lambda _1,\beta }(t,\lambda)\}_{t \geq 0}$. First, we proceed with the following lemma. 
\begin{lemma}\label{lemm1}
	(\cite{samy2018fractional}) For any $\tau \geq 1$, the governing fractional PDE of order $\tau$ for the gamma subordinator $\{\Upsilon(t)\}_{t\geq 0}$ is given by
	\begin{equation*}
	\diffp{^\tau}{t^\tau}	f_G(x,t) = \beta_1	\diffp{^{\tau-1}}{t^{\tau-1}}\left[\log \lambda_1 +\log x - \psi(\beta_1 
	t)\right]	f_G(x,t), \;\; x >0 \text{ and }
	f_G(x,0) =0,
	\end{equation*}
	where $\psi(x)$ is the digamma function and $	\diffp{^\tau}{t^\tau}(\cdot)$ is the Riemann-Liouville fractional differential operator.
\end{lemma}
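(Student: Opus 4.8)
The natural strategy is to first verify the identity for $\tau=1$ by elementary differentiation, and then to promote it to arbitrary $\tau\ge 1$ by applying the Riemann--Liouville operator of order $\tau-1$ to both sides and using the composition rule for fractional derivatives, whose boundary term will drop out precisely because $f_G(x,0)=0$.

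First I would take logarithms in the definition of the gamma density, writing
\[
\log f_G(x,t)=\beta_1 t\log\lambda_1-\log\Gamma(\beta_1 t)+(\beta_1 t-1)\log x-\lambda_1 x,
\]
and differentiate once in $t$, using $\frac{d}{dt}\log\Gamma(\beta_1 t)=\beta_1\psi(\beta_1 t)$. Multiplying back by $f_G(x,t)$ yields
\[
\frac{\partial}{\partial t}f_G(x,t)=\beta_1\bigl[\log\lambda_1+\log x-\psi(\beta_1 t)\bigr]f_G(x,t),
\]
which is the claimed identity in the case $\tau=1$. The initial condition is then immediate: since $\Gamma$ has a simple pole at the origin, $1/\Gamma(\beta_1 t)\to 0$ as $t\downarrow 0$ while $\lambda_1^{\beta_1 t}x^{\beta_1 t-1}e^{-\lambda_1 x}$ stays bounded, so $f_G(x,0^{+})=0$ for each fixed $x>0$; in fact $f_G(x,t)$ vanishes linearly in $t$, which also ensures that the fractional integrals appearing below converge near $t=0$.

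Next, for $\tau\ge 1$ I would apply the Riemann--Liouville operator ${}_0D^{\tau-1}_t$ to both sides of the first-order equation above. On the right-hand side this produces $\beta_1\,{}_0D^{\tau-1}_t\bigl([\log\lambda_1+\log x-\psi(\beta_1 t)]f_G(x,t)\bigr)$, which is exactly the asserted right-hand side of the lemma. On the left-hand side one invokes the composition formula relating the fractional derivative ${}_0D^{\tau-1}_t$ of a first ordinary derivative to ${}_0D^{\tau}_t$, namely
\[
{}_0D^{\tau-1}_t\Bigl(\frac{\partial}{\partial t}f_G(x,t)\Bigr)={}_0D^{\tau}_t f_G(x,t)-\frac{t^{-\tau}}{\Gamma(1-\tau)}\,f_G(x,0^{+}).
\]
Because $f_G(x,0^{+})=0$, the boundary term vanishes, the left-hand side collapses to $\diffp{^\tau}{t^\tau}f_G(x,t)$, and the lemma follows.

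I expect the only genuinely delicate point to be this last step, since the identity ${}_0D^{\tau-1}_t\,\partial_t={}_0D^{\tau}_t$ is not automatic for Riemann--Liouville derivatives: one must check both that $t\mapsto f_G(x,t)$ is regular enough (smooth for $t>0$ and integrable near $0$) for these operators to be defined and composable, and that the single boundary term produced by the composition formula indeed vanishes, which is exactly where the hypothesis $f_G(x,0)=0$ enters. Everything else is routine --- the $\tau=1$ case is one line of calculus, and the passage to general $\tau$ is a single application of the fractional operator.
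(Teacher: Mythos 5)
Your proof is correct and is essentially the standard argument: the paper itself states this lemma without proof, citing \cite{samy2018fractional}, and the derivation there proceeds exactly as you propose --- logarithmic differentiation of the gamma density to obtain the $\tau=1$ identity $\partial_t f_G=\beta_1\left[\log\lambda_1+\log x-\psi(\beta_1 t)\right]f_G$, followed by an application of $\diffp{^{\tau-1}}{t^{\tau-1}}$, with $f_G(x,0^{+})=0$ (from the simple pole of $\Gamma$ at the origin) killing the boundary term in the Riemann--Liouville composition rule. Your explicit flagging of that composition step as the one delicate point is exactly right, and the regularity remarks you give (smoothness for $t>0$ and linear vanishing of $f_G$ as $t\downarrow 0$) suffice to justify it.
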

\noindent The next theorem gives the PDE with respect to time variable satisfying the pdf of the TMLLP.
\begin{theorem}\label{prop1}
	Let $g_{\alpha,\mu}(x,t)$ be the pdf of the TSS. Then the pdf of TMLLP satisfies the following fractional PDE
	\begin{equation*}
	\diffp{^\tau}{t^\tau}  f_{M_{\alpha,\beta_1,\lambda_1,\mu}}(x,t)=\beta_1	\diffp{^{\tau-1}}{t^{\tau-1}}\left [ (\log\lambda_1-\psi (\beta_1t))f_{M_{\alpha,\beta_1,\lambda_1,\mu}}(x,t)+\int_{0}^{\infty } g_{\alpha,\mu}(x,y)(\log y)f_{G}(y,t)dy\right ],x>0,t>0,
	\end{equation*}
 with $f_{M_{\alpha,\beta_1,\lambda_1,\mu}}(x,0)=0.$
\end{theorem}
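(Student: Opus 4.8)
The plan is to exploit the subordination structure directly. Since $M_{\alpha,\beta_1,\lambda_1,\mu}(t)=S_{\alpha,\mu}(G_{\lambda_1,\beta_1}(t))$ with the tempered stable subordinator and the gamma subordinator independent, conditioning on the value of the gamma subordinator gives the mixture representation
\begin{equation*}
f_{M_{\alpha,\beta_1,\lambda_1,\mu}}(x,t)=\int_{0}^{\infty} g_{\alpha,\mu}(x,y)\, f_G(y,t)\, dy,\qquad x>0,\ t>0 .
\end{equation*}
The entire $t$-dependence of the integrand is carried by $f_G(y,t)$, so the governing PDE for the gamma subordinator density supplied by Lemma \ref{lemm1} can be transported through the $y$-integral. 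This is the key idea; the rest is bookkeeping.

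Concretely, I would first differentiate the mixture identity $\tau$ times in $t$ and interchange $\diffp{^\tau}{t^\tau}$ with the integral over $y$, obtaining
\begin{equation*}
\diffp{^\tau}{t^\tau} f_{M_{\alpha,\beta_1,\lambda_1,\mu}}(x,t)=\int_{0}^{\infty} g_{\alpha,\mu}(x,y)\, \diffp{^\tau}{t^\tau} f_G(y,t)\, dy .
\end{equation*}
Next I would insert the expression for $\diffp{^\tau}{t^\tau} f_G(y,t)$ from Lemma \ref{lemm1} (with the dummy spatial variable of that lemma taken equal to $y$), namely $\beta_1\diffp{^{\tau-1}}{t^{\tau-1}}\big[(\log\lambda_1+\log y-\psi(\beta_1 t))f_G(y,t)\big]$, pull the constant $\beta_1$ and the operator $\diffp{^{\tau-1}}{t^{\tau-1}}$ back outside the $y$-integral, and split the bracketed factor into the $y$-independent part $\log\lambda_1-\psi(\beta_1 t)$ and the part $\log y$. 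Using $\int_{0}^{\infty} g_{\alpha,\mu}(x,y) f_G(y,t)\, dy = f_{M_{\alpha,\beta_1,\lambda_1,\mu}}(x,t)$ in the first summand reproduces exactly the asserted PDE. The initial condition follows from $f_G(y,0)=0$ (stated in Lemma \ref{lemm1}), hence $f_{M_{\alpha,\beta_1,\lambda_1,\mu}}(x,0)=\int_0^\infty g_{\alpha,\mu}(x,y) f_G(y,0)\,dy=0$; equivalently $M_{\alpha,\beta_1,\lambda_1,\mu}(0)=0$ a.s.

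The main obstacle is the rigorous justification of the two interchanges of a Riemann--Liouville fractional $t$-derivative with the improper $y$-integral over $(0,\infty)$. Writing $\diffp{^\tau}{t^\tau}$ as a finite number of ordinary $t$-derivatives applied to the fractional integral $\int_{0}^{t}(t-s)^{\,\cdot}f_G(y,s)\,ds$, the interchanges are legitimate provided one exhibits majorants, integrable in $y$ and locally uniform in $t$ on compact subsets of $(0,\infty)$, for expressions of the form $g_{\alpha,\mu}(x,y)(1+|\log y|)\,\big|\partial_t^{\,j}\!\int_{0}^{t}(t-s)^{n-\tau}f_G(y,s)\,ds\big|$ with $n<\tau<n+1$. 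Such bounds can be extracted from the explicit Gamma-type expression for $f_G$ in Section~\ref{subsec:1}(iv), together with the integrability of $g_{\alpha,\mu}(x,\cdot)$ against polynomial and logarithmic weights, which in turn reflects the finiteness of all moments (and of $\mathbb{E}[|\log M_{\alpha,\beta_1,\lambda_1,\mu}(t)|]$-type quantities) established in \cite{kumar2019tempered}. Once these estimates are in place, dominated convergence and Fubini's theorem justify both interchanges, and the remaining algebra is immediate; for integer $\tau$ this simplifies further, as $\diffp{^\tau}{t^\tau}$ is then an ordinary derivative.
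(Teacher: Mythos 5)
Your proposal is correct and follows essentially the same route as the paper's own proof: write $f_{M_{\alpha,\beta_1,\lambda_1,\mu}}(x,t)=\int_0^\infty g_{\alpha,\mu}(x,y)f_G(y,t)\,dy$, interchange the Riemann--Liouville derivative with the $y$-integral, apply Lemma \ref{lemm1}, and split the bracket into the $y$-independent part and the $\log y$ part. The only difference is that you additionally sketch how to justify the interchanges via dominated convergence, a point the paper passes over silently.
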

\begin{proof} Consider
	\begin{equation*}
f_{M_{\alpha,\beta_1,\lambda_1,\mu}}(x,t)=\int_{0}^{\infty } g_{\alpha,\mu}(x,y)f_{G}(y,t)dy.
	\end{equation*}
Using the Riemann-Liouville fractional derivative, we obtain
%	Considering Lemma \ref{lemm1}, we get
	\begin{align*}
	\diffp{^\tau}{t^\tau} f_{M_{\alpha,\beta_1,\lambda_1,\mu}}(x,t)
	&= \diffp{^\tau}{t^\tau}\int_{0}^{\infty }g_{\alpha,\mu}(x,y)f_{G}(y,t)dy\\
	&= \int_{0}^{\infty }g_{\alpha,\mu}(x,y)\diffp{^\tau}{t^\tau} 	f_G(y,t) dy\\
	&= \int_{0}^{\infty} g_{\alpha,\mu}(x,y) \left[\beta_1 \diffp{^{\tau-1}}{t^{\tau-1}}\left[\log \lambda_1 +\log y - \psi(\beta_1 t)\right]	f_G(y,t)\right]dy \;\;\;(\text{using  Lemma \ref{lemm1}})\\
	&= \beta_1	\diffp{^{\tau-1}}{t^{\tau-1}}\int_{0}^{\infty} g_{\alpha,\mu}(x,y)\left(\log \lambda_1 - \psi(\beta_1 t) \right) f_G(y,t)dy+\beta_1 \int_{0}^{\infty}g_{\alpha,\mu}(x,y) (\log y) 	\diffp{^{\tau-1}}{t^{\tau-1}}f_G(y,t)dy.\qedhere
	\end{align*}
\end{proof}
With the help of Theorem \ref{prop1}, we can now obtain the following result.
%governing fractional PDE of order $\tau \geq 1$ for $p_{\alpha,\beta _1,\mu }^{\lambda _1,\beta }(n,t)$ with respect to the time variable of the form
\begin{theorem} For  $\tau \geq 1$, the pdf  $p_{\alpha,\beta _1,\mu }^{\lambda _1,\beta }(n,t)$ of TSTFNBP satisfies
\begin{align*}
&\frac{1}{\beta_1}	\diffp{^\tau}{t^\tau} 	p_{\alpha ,\beta _1,\mu }^{\lambda _1,\beta }(n,t) = 	\diffp{^{\tau-1}}{t^{\tau-1}}\left[\left(\log \lambda_1 - \psi(\beta_1 t) \right) 	p_{\alpha ,\beta _1,\mu }^{\lambda _1,\beta }(n,t)+\int_{0}^{\infty}\int_{0}^{\infty}	p_{\beta}(n/y_1, \lambda)g_{\alpha,\mu}(y_1,y) (\log y) f_G(y,t)dy dy_1\right],
\end{align*}
\text{with}\; $p_{\alpha ,\beta _1,\mu }^{\lambda _1,\beta }(0,0) = 1.$
\end{theorem}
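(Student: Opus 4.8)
The plan is to start from the subordination representation of the pmf already established in the computation preceding Remark~3.1, namely
\begin{equation*}
p_{\alpha,\beta_1,\mu}^{\lambda_1,\beta}(n,t)=\int_{0}^{\infty} p_{\beta}(n/y_1,\lambda)\, f_{M_{\alpha,\beta_1,\lambda_1,\mu}}(y_1,t)\,dy_1 .
\end{equation*}
Since $p_{\beta}(n/y_1,\lambda)$ does not depend on $t$, applying the Riemann--Liouville operator $\diffp{^\tau}{t^\tau}$ in the time variable and interchanging it with the $y_1$-integral gives
\begin{equation*}
\diffp{^\tau}{t^\tau} p_{\alpha,\beta_1,\mu}^{\lambda_1,\beta}(n,t)=\int_{0}^{\infty} p_{\beta}(n/y_1,\lambda)\,\diffp{^\tau}{t^\tau} f_{M_{\alpha,\beta_1,\lambda_1,\mu}}(y_1,t)\,dy_1 .
\end{equation*}
The interchange is justified because $0\le p_{\beta}(n/y_1,\lambda)\le 1$ and, by \cite{kumar2019tempered}, all moments $\mathbb{E}[M_{\alpha,\beta_1,\lambda_1,\mu}(t)^{\rho}]$ are finite and locally bounded in $t$, which controls the $y_1$-integrals of the $t$-derivatives of $f_{M_{\alpha,\beta_1,\lambda_1,\mu}}$ uniformly on compact $t$-sets.

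Next I would insert the expression for $\diffp{^\tau}{t^\tau} f_{M_{\alpha,\beta_1,\lambda_1,\mu}}(y_1,t)$ provided by Theorem~\ref{prop1}. This splits the integrand into two pieces. In the first, the prefactor $\beta_1(\log\lambda_1-\psi(\beta_1 t))$ is independent of $y_1$, so pulling it and $\diffp{^{\tau-1}}{t^{\tau-1}}$ outside the $y_1$-integral leaves exactly $\int_{0}^{\infty} p_{\beta}(n/y_1,\lambda) f_{M_{\alpha,\beta_1,\lambda_1,\mu}}(y_1,t)\,dy_1=p_{\alpha,\beta_1,\mu}^{\lambda_1,\beta}(n,t)$. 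The second piece carries the inner integral $\int_{0}^{\infty} g_{\alpha,\mu}(y_1,y)(\log y) f_G(y,t)\,dy$; multiplying by $p_{\beta}(n/y_1,\lambda)$, integrating in $y_1$, and invoking Fubini--Tonelli to swap the $y$- and $y_1$-integrations yields the double integral $\int_{0}^{\infty}\int_{0}^{\infty} p_{\beta}(n/y_1,\lambda)\,g_{\alpha,\mu}(y_1,y)(\log y) f_G(y,t)\,dy\,dy_1$ in the statement. Adding the two contributions and dividing by $\beta_1$ gives the claimed identity.

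For the initial condition I would argue probabilistically: as $t\downarrow 0$ the TMLLP satisfies $M_{\alpha,\beta_1,\lambda_1,\mu}(t)\to 0$ almost surely, so $\mathcal{Q}_{\alpha,\beta_1,\mu}^{\lambda_1,\beta}(t,\lambda)=N_\beta(M_{\alpha,\beta_1,\lambda_1,\mu}(t),\lambda)\to N_\beta(0,\lambda)=0$; hence $p_{\alpha,\beta_1,\mu}^{\lambda_1,\beta}(n,0)=\delta_{n,0}$, in particular $p_{\alpha,\beta_1,\mu}^{\lambda_1,\beta}(0,0)=1$. (Alternatively, this follows by letting $t\downarrow0$ in the series for the pmf, using $f_{M_{\alpha,\beta_1,\lambda_1,\mu}}(x,0)=0$ together with $p_{\beta}(0/y,\lambda)\to1$ as $y\to0$.)

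The step I expect to be the main obstacle is the rigorous justification of pulling the fractional time-derivative — and, inside it, the further derivative $\diffp{^{\tau-1}}{t^{\tau-1}}$ coming from Theorem~\ref{prop1} — under the $y_1$-integral, and of the ensuing Fubini interchange. Concretely one must verify that the $y_1$-integrals of $\diffp{^{j}}{t^{j}} f_{M_{\alpha,\beta_1,\lambda_1,\mu}}(y_1,t)$ and of $g_{\alpha,\mu}(y_1,y)(\log y) f_G(y,t)$ converge uniformly for $t$ in compact subsets of $(0,\infty)$; the boundedness $p_{\beta}(n/y_1,\lambda)\le1$ and the finiteness of all moments of the TMLLP should make this routine but it is where the care is needed. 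The subsequent recombination of the two integrals is purely algebraic.
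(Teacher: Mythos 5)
Your proposal is correct and follows exactly the route the paper intends: the paper gives no explicit proof for this theorem, merely stating that it follows "with the help of Theorem \ref{prop1}," and your argument — differentiating the subordination representation $p_{\alpha,\beta_1,\mu}^{\lambda_1,\beta}(n,t)=\int_0^\infty p_\beta(n/y_1,\lambda)f_{M_{\alpha,\beta_1,\lambda_1,\mu}}(y_1,t)\,dy_1$ under the integral sign, substituting the PDE of Theorem \ref{prop1}, and recombining via Fubini — is precisely the omitted computation. Your additional attention to justifying the interchanges and the initial condition goes slightly beyond what the paper records, but does not constitute a different approach.
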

\begin{remark}
\noindent When $\mu=0,$ the governing PDE of the TSTFNBP reduces to \begin{equation*}
\frac{1}{\beta_1}	\diffp{^\tau}{t^\tau} 	p_{\alpha ,\beta _1,0 }^{\lambda _1,\beta }(n,t) = 	\diffp{^{\tau-1}}{t^{\tau-1}}\left[\left(\log \lambda_1 - \psi(\beta_1 t) \right) 	p_{\alpha ,\beta _1,0 }^{\lambda _1,\beta }(n,t)+\int_{0}^{\infty}\int_{0}^{\infty}	p_{\beta}(n/y_1, \lambda)g_{\alpha}(y_1,y) (\log y) f_G(y,t)dy dy_1\right], \text{with}\; p_{\alpha ,\beta _1,\mu }^{\lambda _1,\beta }(0,0) = 1,
\end{equation*} as reported by \cite{soni2024generalized}. Moreover,
    when $\beta=1$, it %\begin{equation*}
% \frac{1}{\beta_1}	\diffp{^\tau}{t^\tau} 	p_{\alpha ,\beta _1,0 }^{\lambda _1,1 }(n,t) = 	\diffp{^{\tau-1}}{t^{\tau-1}}\left[\left(\log \lambda_1 - \psi(\beta_1 t) \right) 	p_{\alpha ,\beta _1,0 }^{\lambda _1,1 }(n,t)+\int_{0}^{\infty}	\bar{p}_{\beta}(n/y, \lambda)(\log y) f_G(y,t)dy \right], \text{with}\; p_{\alpha ,\beta _1,0 }^{\lambda _1,1 }(0,0) = 1,
% \end{equation*} where $\bar{p}_{\beta}(n/y, \lambda)=\int_{0}^{\infty}	p_{\beta}(n/y_1, \lambda)g_{\alpha}(y_1,y)dy_1$ is pmf of space fractional poisson process and PDE 
corresponds to the PDE of the SFNB as studied in \cite{beghin2018space}.
\end{remark}
% \textcolor{blue}{when $\mu=0$ the governing pde reduces to \begin{equation*}
% \frac{1}{\beta_1}	\diffp{^\tau}{t^\tau} 	p_{\alpha ,\beta _1,0 }^{\lambda _1,\beta }(n,t) = 	\diffp{^{\tau-1}}{t^{\tau-1}}\left[\left(\log \lambda_1 - \psi(\beta_1 t) \right) 	p_{\alpha ,\beta _1,0 }^{\lambda _1,\beta }(n,t)+\int_{0}^{\infty}\int_{0}^{\infty}	p_{\beta}(n/y_1, \lambda)g_{\alpha}(y_1,y) (\log y) f_G(y,t)dy dy_1\right]
% \end{equation*} this pde corresponds to GFNBP as done in \cite{soni2024generalized} }
Next, we discuss the asymptotic behavior for the moments for the TMLLP.
\begin{theorem}\label{prop2} 
	Let $q>0$, the asymptotic behavior of $q^{th}$ order moments of TMLLP is given by
 \begin{equation*}\mathbb{E}({M_{\alpha,\beta_1,\lambda_1,\mu}(t) })^{q}\sim \left ( \frac{\alpha \beta_1\mu ^{\alpha -1}}{\lambda_1}  \right )^qt^q \;\text{as} \;  t \to \infty. \end{equation*}
\end{theorem}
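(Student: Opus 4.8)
The plan is to exploit the fact that $\{M_{\alpha,\beta_1,\lambda_1,\mu}(t)\}_{t\ge 0}$ is a subordinator: being the composition $S_{\alpha,\mu}(G_{\lambda_1,\beta_1}(t))$ of the TSS with an independent gamma subordinator, it has nondecreasing sample paths and stationary, independent increments. I would then combine the strong law of large numbers with a uniform integrability estimate, rather than attempting to extract the asymptotics from an explicit series representation of $\mathbb{E}[M_{\alpha,\beta_1,\lambda_1,\mu}(t)^q]$.

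First I would pin down the linear growth rate $c:=\mathbb{E}[M_{\alpha,\beta_1,\lambda_1,\mu}(1)]$ by differentiating the Laplace transform \eqref{lt1} at $u=0$ with $t=1$. Writing $h(u)=\lambda_1-\mu^\alpha+(\mu+u)^\alpha$, so that $\mathbb{E}[e^{-uM_{\alpha,\beta_1,\lambda_1,\mu}(1)}]=(\lambda_1/h(u))^{\beta_1}$, one gets $c=-\tfrac{d}{du}(\lambda_1/h(u))^{\beta_1}\big|_{u=0}=\beta_1 h'(0)/\lambda_1=\alpha\beta_1\mu^{\alpha-1}/\lambda_1$, which is exactly the constant appearing in the statement; the same computation at general $t$ gives $\mathbb{E}[M_{\alpha,\beta_1,\lambda_1,\mu}(t)]=ct$, so the case $q=1$ is in fact an equality. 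Since $M_{\alpha,\beta_1,\lambda_1,\mu}(1)$ has finite mean, the classical SLLN applied to the i.i.d.\ increments $M_{\alpha,\beta_1,\lambda_1,\mu}(i)-M_{\alpha,\beta_1,\lambda_1,\mu}(i-1)$ yields $M_{\alpha,\beta_1,\lambda_1,\mu}(n)/n\to c$ a.s.\ along integers, and the monotonicity sandwich $M_{\alpha,\beta_1,\lambda_1,\mu}(n)\le M_{\alpha,\beta_1,\lambda_1,\mu}(t)\le M_{\alpha,\beta_1,\lambda_1,\mu}(n+1)$ for $t\in[n,n+1)$ upgrades this to $M_{\alpha,\beta_1,\lambda_1,\mu}(t)/t\to c$ a.s.\ as $t\to\infty$, hence $(M_{\alpha,\beta_1,\lambda_1,\mu}(t)/t)^q\to c^q$ a.s.

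It then remains to pass from almost sure convergence to convergence of the $q$th moments, i.e.\ to establish uniform integrability of $\{(M_{\alpha,\beta_1,\lambda_1,\mu}(t)/t)^q:t\ge 1\}$. For this I would fix $p=q+1$ (so $p>q$ and $p\ge 1$) and bound $\mathbb{E}[M_{\alpha,\beta_1,\lambda_1,\mu}(t)^p]$: by monotonicity $M_{\alpha,\beta_1,\lambda_1,\mu}(t)\le M_{\alpha,\beta_1,\lambda_1,\mu}(\lceil t\rceil)$, the latter equals in distribution a sum of $\lceil t\rceil$ i.i.d.\ copies of $M_{\alpha,\beta_1,\lambda_1,\mu}(1)$, and convexity of $x\mapsto x^p$ gives $\mathbb{E}[M_{\alpha,\beta_1,\lambda_1,\mu}(\lceil t\rceil)^p]\le\lceil t\rceil^{\,p}\,\mathbb{E}[M_{\alpha,\beta_1,\lambda_1,\mu}(1)^p]$. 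Since all moments of $M_{\alpha,\beta_1,\lambda_1,\mu}(1)$ are finite (see \cite{kumar2019tempered}) and $\lceil t\rceil/t\le 2$ for $t\ge1$, this gives $\sup_{t\ge1}\mathbb{E}[(M_{\alpha,\beta_1,\lambda_1,\mu}(t)/t)^{p}]\le 2^{p}\,\mathbb{E}[M_{\alpha,\beta_1,\lambda_1,\mu}(1)^p]<\infty$, and the de la Vall\'ee--Poussin criterion (with exponent ratio $p/q>1$) yields the required uniform integrability. Combined with the a.s.\ limit, this gives $\mathbb{E}[(M_{\alpha,\beta_1,\lambda_1,\mu}(t)/t)^q]\to c^q$, i.e.\ $\mathbb{E}[M_{\alpha,\beta_1,\lambda_1,\mu}(t)^q]\sim c^q t^q$, as claimed.

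The only genuinely delicate step is the uniform integrability bound; the SLLN input and the computation of $c$ are routine once $M_{\alpha,\beta_1,\lambda_1,\mu}$ is recognised as a finite-mean subordinator. An alternative would be a Tauberian analysis of the Mellin transform $q\mapsto\mathbb{E}[M_{\alpha,\beta_1,\lambda_1,\mu}(t)^q]$ derived from \eqref{lt1}, but that route appears more cumbersome than the probabilistic argument outlined above.
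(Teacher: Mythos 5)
Your proof is correct, but it takes a genuinely different route from the paper. The paper works analytically: it starts from the integral representations of the fractional moments of $M_{\alpha,\beta_1,\lambda_1,\mu}(t)$ in terms of derivatives of the Laplace transform (from \cite{kumar2019tempered}), substitutes $z=u+\mu$, Taylor-expands the phase $f(z)=\beta_1\ln(\lambda_1-\mu^{\alpha}+z^{\alpha})$ and the amplitude $g(z)$ about $z=\mu$, and invokes the Laplace--Erdelyi theorem to obtain a full asymptotic series whose coefficients involve partial ordinary Bell polynomials; the leading term is then read off, with separate treatment of integer $q$ (quoted from \cite{kumar2019tempered}), $q\in(0,1)$, and $q\in(n-1,n)$. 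Your argument replaces all of this with the observation that $M_{\alpha,\beta_1,\lambda_1,\mu}$ is a finite-mean subordinator: the constant $c=\alpha\beta_1\mu^{\alpha-1}/\lambda_1$ is exactly $\mathbb{E}[M_{\alpha,\beta_1,\lambda_1,\mu}(1)]$ obtained by differentiating \eqref{lt1}, the SLLN plus the monotonicity sandwich gives $M_{\alpha,\beta_1,\lambda_1,\mu}(t)/t\to c$ a.s., and your uniform integrability bound (which is the only nontrivial step, and is carried out correctly via $M(t)\le M(\lceil t\rceil)$, the i.i.d.\ increment decomposition, Jensen, and de la Vall\'ee--Poussin with exponent $p/q>1$, using the finiteness of all moments already noted in the paper) upgrades this to convergence of the $q$th moments. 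Your approach handles all $q>0$ uniformly without case-splitting, is shorter, makes the form of the constant transparent as $(\mathbb{E}[M(1)])^q$, and generalizes verbatim to any subordinator with positive finite mean and all moments finite; what it gives up is the full asymptotic expansion (the higher-order coefficients $c_k$, $d_k$) that the Laplace--Erdelyi method produces and that could be of independent interest for refined error estimates. Note that both arguments genuinely require $\mu>0$, since at $\mu=0$ the mean of the tempered stable component, and hence of $M$, is infinite, consistent with the factor $\mu^{\alpha-1}$ in the statement.
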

\begin{proof} The proof of the theorem can be  executed in two parts. \\
\textit{Case 1 -- when $q$ is integer:
} %We first proceed with the case when $q$ is integer and later for the fractional $q$. Let $q$ be integer, 
We have the following representation from 
\cite{kumar2019tempered}
 \begin{equation*}\mathbb{E}({M_{\alpha,\beta_1,\lambda_1,\mu}(t) })^{q}\sim \left ( \frac{\alpha \beta_1\mu ^{\alpha -1}}{\lambda_1}  \right )^qt^q \;\text{as} \;  t \to \infty. \end{equation*}
\textit{Case 2 -- when $q$ is non-integer: } Assume $0<q<1$. With the help of \cite[eq. (6)]{kumar2019tempered}, we have
\begin{align*}\mathbb{E}\left [ ({M_{\alpha,\beta_1,\lambda_1,\mu}(t) })^{q}\right ] =& \frac{-1}{\Gamma (1-q)}\int_{0}^{\infty }\frac{\mathrm{d} }{\mathrm{d} u}\left [ \frac{\lambda_1}{\lambda_1-\mu^{\alpha}+(\mu+u)^\alpha} \right ]^{\beta_1t}u^{-q} du\\
&=\frac{\alpha \beta_1 t\lambda_1 ^{\beta_1 t}}{\Gamma (1-q)}\int_{0}^{\infty}\frac{(u+\mu)^{\alpha-1}u^{-q}}{\left [  \lambda_1-\mu^{\alpha}+(\mu+u)^\alpha\right ]^{\beta_1 t+1}}du\\ 
&=\frac{\alpha \beta_1 t\lambda_1 ^{\beta_1 t}}{\Gamma (1-q)}\int_{0}^{\infty}\frac{(u+\mu)^{\alpha-1}u^{-q}}{\left [  \lambda_1-\mu^{\alpha}+(\mu+u)^\alpha\right ]}e^{-\beta_1t\ln[\lambda_1-\mu^{\alpha}+(u+\mu)^{\alpha}]}du\\
&=\frac{\alpha \beta_1 t\lambda_1 ^{\beta_1 t}}{\Gamma (1-q)}\int_{\mu}^{\infty}\frac{z^{\alpha-1}(z-\mu)^{-q}}{\left [  \lambda_1-\mu^{\alpha}+z^\alpha\right ]}e^{-\beta_1t\ln[\lambda_1-\mu^{\alpha}+z^{\alpha}]}dz~~~~(\text{by~ letting}~ u+\mu= z).
\end{align*}
By taking $f(z)=\beta_1\ln(\lambda_1-\mu^{\alpha_1}+z^{\alpha})$ and $g(z)=\frac{z^{\alpha-1}(z-\mu)^{-q}}{\left [  \lambda_1-\mu^{\alpha}+z^\alpha\right ]}$, we have the Taylor series around $\mu$ of the form
\begin{align*}f(z)&=\beta_1\ln\lambda_1 + \frac{\beta_1\alpha\mu^{\alpha-1}}{\lambda_1}(z-\mu)+\frac{\beta_1\alpha\mu^{\alpha-2}}{\lambda_1^2}[(\alpha-1)\lambda_1-\alpha\mu^\alpha](z-\mu)^2+\cdots\\
	&=f(\mu)+\sum_{k=0}^{\infty }a_j(z-\mu)^{k+\delta},
\end{align*}
 where $\delta=1,~ f(\mu)=\beta_1\ln\lambda_1, ~a_0=\frac{\beta_1\alpha\mu^{\alpha-1}}{\lambda_1}$, and $a_1=\frac{\beta_1\alpha\mu^{\alpha-2}}{\lambda_1^2}[(\alpha-1)\lambda_1-\alpha\mu^{\alpha}]$. Additionally
\begin{align*}g(z)&= (z-\mu)^{-q}\left[ \frac{\mu^{\alpha-1}}{\lambda_{1}}+\frac{\left( \alpha-1 \right)\lambda_{1}\mu^{\alpha-2}-\alpha\mu^{2\alpha-2}}{\lambda_{1}^{2}}(z-\mu)+ \frac{\lambda_{1}^{2}(\alpha-1)(\alpha-2)\mu^{\alpha-3}-3\alpha(\alpha-1)\mu^{2\alpha-3}\lambda_{1}+2\alpha^{2}\mu^{2\alpha-3}}{\lambda_{1}^{3}}(z-\mu)^2+\cdots \right]\\
&=\sum_{k=0}^{\infty }b_{k}\left( z-\mu \right)^{k+\gamma-1},
\end{align*}
 where $\gamma=1-q,~ b_0=\frac{\mu^{\alpha-1}}{\lambda_1},$ and $b_1=\frac{(\alpha-1)\lambda_1\mu^{\alpha-2}-\alpha\mu^{2\alpha-2}}{\lambda_1^2}$.\newline
Using Laplace-Erdelyi theorem (see \cite[Appendix A]{kumar2019tempered}), we have that
\begin{align*}\int_{\mu}^{\infty }\frac{z^{\alpha-1}(z-\mu)^{-q}}{\lambda_{1}-\mu^{\alpha}+z^{\alpha}}e^{-\beta_1 t\ln(\lambda_{1}-\mu^{\alpha}+z^{\alpha})}dz&\sim e^{-t\beta_1\ln\lambda_{1}}\sum_{k=0}^{\infty }\Gamma\left(\frac{k+\gamma}{\delta}\right)\frac{c_k}{t^{\frac{k+\gamma}{\delta}}}
=\lambda_{1}^{-\beta_1 t}\sum_{k=0}^{\infty }\Gamma(k+1-q)\frac{c_k}{t^{k+1-q}}.
\end{align*}
Hence, we obtain
\begin{align}\label{LT1}\mathbb{E}\left [ ({M_{\alpha,\beta_1,\lambda_1,\mu}(t) })^{q}\right]\sim \frac{\alpha\beta_1 t}{\Gamma(1-q)}\sum_{k=0}^{\infty }\Gamma(k+1-q)\frac{c_k}{t^{k+1-q}},
\end{align}
where $c_k$ in terms of coefficients $a_k$ and $b_k$ is given by \newline
\begin{equation*}c_k=\frac{1}{a_0^{\frac{k+\gamma}{\delta}}}\sum_{j=0}^{k}b_{k-j}\sum_{i=0}^{j}\binom{-\frac{k+\gamma}{\delta}}{i}\frac{1}{a_0^{i}}\hat{B}_{(j,i)}(a_1,a_2,..,a_{j-i+1})
\end{equation*}
and $\hat{B}_{(j,i)}$ are the partial ordinary Bell polynomials (see \cite{andrews1998theory} and \cite{soni2023probabilistic}). 
The dominating term in (\ref{LT1}), for large $t$, leads to 
\begin{align*}\mathbb{E}\left [ ({M_{\alpha,\beta_1,\lambda_1,\mu}(t) })^{q}\right]\sim c_0\alpha \beta_1 t ^q,
\end{align*} 
where $c_0=\left( \frac{\mu^{\alpha-1}}{\lambda_1}\right)^{q}\frac{1}{\left( \alpha\beta_1 \right)^{1-q}}$. Hence 
\begin{align*}\mathbb{E}({M_{\alpha,\beta_1,\lambda_1,\mu}(t) })^{q}\sim \left ( \frac{\alpha \beta_1\mu ^{\alpha -1}}{\lambda_1}  \right )^qt^q\;\; \text{as}\; t \to \infty\; \text{for}\; q\in(0,1).
\end{align*}
On similar lines, we obtain for general $q\in(n-1,n)$
\begin{align*} 
	\mathbb{E}\left [ ({M_{\alpha,\beta_1,\lambda_1,\mu}(t) })^{q}\right ] &=  \frac{(-1)^n}{\Gamma (n-q)}\int_{0}^{\infty }\frac{\mathrm{d}^n }{\mathrm{d} u^n}\left [ \frac{\lambda_1}{\lambda_1-\mu^{\alpha}+(\mu+u)^\alpha} \right ]^{\beta_1t}u^{n-q-1} du \\ 
	& =\frac{(\alpha\beta_1 t)^n}{\Gamma (n-q)}\int_{0}^{\infty } \frac{{\lambda_1^{\beta_1 t} (u+\mu)^{n(\alpha-1)}}}{\left [  \lambda_1-\mu^{\alpha}+(\mu+u)^\alpha\right ]^{\beta_1 t+n}}u^{n-q-1}du \\
	&\sim\frac{(\alpha\beta_1 t)^{n}\lambda_{1}^{\beta_1 t}}{\Gamma(n-q)}\int_{0}^{\infty }\frac{(u+\mu)^{n(\alpha-1)}u^{n-q-1}}{\left[ \lambda_1-\mu^{\alpha}+(\mu+u)^{\alpha} \right]^{n}}e^{-\beta_1 t\ln(\lambda_1-\mu^{\alpha}+(\mu+u)^{\alpha})}du\\
 &=\frac{(\alpha\beta_1 t)^{n}\lambda_{1}^{\beta_1 t}}{\Gamma(n-q)}\int_{\mu}^{\infty }\frac{z^{n(\alpha-1)}(z-\mu)^{n-q-1}}{\left[ \lambda_1-\mu^{\alpha}+z^{\alpha} \right]^{n}}e^{-\beta_1 t\ln(\lambda_1-\mu^{\alpha}+z^{\alpha})}dz~~~(\text{by~ letting}~ u+\mu= z).
\end{align*}
Let $f(z)=\beta_1\ln(\lambda_1-\mu^{\alpha}+z^{\alpha})$ and $g(z)=(z-\mu)^{n-q-1}\left[ \frac{z^{\alpha-1}}{\lambda_1-\mu^{\alpha}+z^{\alpha}} \right]^n$. Then, we have
\begin{align*}
	g(z)& =(z-\mu)^{n-q-1}\left[ \left(\frac{\mu^{\alpha-1}}{\lambda_1} \right)^n+n\left( \frac{\mu^{\alpha-1}}{\lambda_1} \right)^{n-1}\frac{(\alpha-1)\lambda_1\mu^{\alpha-2}-\alpha\mu^{2\alpha-2}}{\lambda_1^2}(z-\mu)+\cdots  \right]\\
&= \sum_{k=0}^{\infty }b_k(z-\mu)^{k+\gamma-1},
\end{align*} 
where $\gamma=n-q,~ b_0=\left(\frac{\mu^{\alpha-1}}{\lambda_1} \right)^n,$ and $ b_1=n\left( \frac{\mu^{\alpha-1}}{\lambda_1} \right)^{n-1}\frac{(\alpha-1)\lambda_1\mu^{\alpha-2}-\alpha\mu^{2\alpha-2}}{\lambda_1^2}$.
With the help of  Laplace-Erdelyi theorem (see \cite[Appendix A]{kumar2019tempered}), we get
\begin{align*}
	\int_{\mu}^{\infty }\frac{z^{n(\alpha-1)}(z-\mu)^{n-q-1}}{\left[ \lambda_1-\mu^{\alpha}+z^{\alpha} \right]^{n}}e^{-\beta_1 t\ln(\lambda_1-\mu^{\alpha}+z^{\alpha})}dz\sim e^{-t\beta_1\ln\lambda_1}\sum_{k=0}^{\infty }\Gamma(k+n-q)\frac{d_k}{t^{k+n-q}}.
 \end{align*}
Hence, we have
\begin{align*}\mathbb{E}\left [ ({M_{\alpha,\beta_1,\lambda_1,\mu}(t) })^{q}\right ]\sim \frac{(\alpha\beta_1 t)^n}{\Gamma(n-q)}\sum_{k=0}^{\infty}\Gamma(k+n-q)\frac{d_k}{t^{k+n-q}},
\end{align*}
where $d_k$ in terms of coefficient of $a_k $ and $b_k$ is given by 
\begin{align*}d_k=\frac{1}{a_0^{\frac{k+\gamma}{\delta}}}\sum_{j=0}^{k}b_{k-j}\sum_{i=0}^{j}\binom{-\frac{k+\gamma}{\delta}}{i}\frac{1}{a_0^{i}}\hat{B}_{(j,i)}(a_1,a_2,..,a_{j-i+1}).
\end{align*}
The dominating term, for large $t$, in the above series  corresponds to 
\begin{align*}\mathbb{E}\left [ ({M_{\alpha,\beta_1,\lambda_1,\mu}(t) })^{q}\right]\sim d_0\frac{(\alpha \beta_1 t )^n}{t^{n-q}},
\end{align*}
where $d_0=\left( \frac{\lambda_1}{\mu^{\alpha-1}}\right)^{-q}\frac{1}{\left( \alpha\beta_1 \right)^{n-q}}$. Therefore
\begin{align*}
	\mathbb{E}({M_{\alpha,\beta_1,\lambda_1,\mu}(t) })^{q}\sim \left ( \frac{\alpha \beta_1\mu ^{\alpha -1}}{\lambda_1}  \right )^qt^q\;\; \text{as}\; t \to \infty \; \text{for}\; q\in(n-1,n).\end{align*}\qedhere
\end{proof}
Next, we present the mean, variance, and autocovariance functions for the TSTFNBP. 
\subsection{Mean, variance, autocovariance and index of dispersion}
\begin{theorem}\label{thm1} Let $0<s\leq t <\infty$, the mean, variance, and autocovariance of the process $\left \{ \mathcal{Q}_{\alpha ,\beta _1,\mu }^{\lambda _1,\beta }(t,\lambda)\right \}_{t\geq0}$ are given by 
	\noindent (i) $\mathbb{E}\left [ \mathcal{Q}_{\alpha ,\beta _1,\mu }^{\lambda _1,\beta }(t,\lambda) \right ]=q_1\mathbb{E}\left [ ({M_{\alpha,\beta_1,\lambda_1,\mu}(t) })^{\beta}  \right ]\sim q_1 \left ( \frac{\alpha \beta_1\mu ^{\alpha -1}t}{\lambda_1}  \right )^{\beta}$\\
   \noindent	(ii) $\text{Var}\left [ \mathcal{Q}_{\alpha ,\beta _1,\mu }^{\lambda _1,\beta }(t,\lambda) \right ]=q_1\mathbb{E}\left [ ({M_{\alpha,\beta_1,\lambda_1,\mu}(t) })^{\beta}  \right ]-q_1^2\left [\mathbb{E}\left [ ({M_{\alpha,\beta_1,\lambda_1,\mu}(t) })^{\beta}  \right ]\right]^2+2c_2\mathbb{E}\left [ ({M_{\alpha,\beta_1,\lambda_1,\mu}(t) })^{2\beta}  \right ]$
  \noindent \begin{align*} \text{(iii)}~  \text{Cov}\left [ \mathcal{Q}_{\alpha ,\beta _1,\mu }^{\lambda _1,\beta }(s,\lambda) ,  \mathcal{Q}_{\alpha ,\beta _1,\mu }^{\lambda _1,\beta }(t,\lambda) \right ]&= q_1\mathbb{E}\left [ ({M_{\alpha,\beta_1,\lambda_1,\mu}(s) })^{\beta}  \right ]+c_1\mathbb{E}\left [ ({M_{\alpha,\beta_1,\lambda_1,\mu}(s) })^{2\beta}  \right ]-q_1^2\mathbb{E}\left [ ({M_{\alpha,\beta_1,\lambda_1,\mu}(s) })^{\beta}  \right ]\mathbb{E}\left [ ({M_{\alpha,\beta_1,\lambda_1,\mu}(t) })^{\beta}  \right ]\\ &+q_1^2\beta\mathbb{E}\left [({M_{\alpha,\beta_1,\lambda_1,\mu}(t) })^{2\beta} B\left(\beta,1+\beta;\frac{M_{\alpha,\beta_1,\lambda_1,\mu}(s)}{M_{\alpha,\beta_1,\lambda_1,\mu}(t)}\right) \right ],
  \end{align*}
	where $q_1=\frac{\lambda}{\Gamma(1+\beta)}, ~c_1=\beta q_1^2B(\beta,1+\beta), ~c_2=\frac{\lambda^2}{\Gamma(2\beta+1)}$, and $B(m,n;x)=\int_{0}^{x}t^{m-1}(1-t)^{n-1}dt$ for $0<x<1$ is an incomplete beta function. 
\end{theorem}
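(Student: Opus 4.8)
The plan is to exploit the conditional independence in the subordination $\mathcal{Q}_{\alpha,\beta_1,\mu}^{\lambda_1,\beta}(t,\lambda)=N_\beta(M_{\alpha,\beta_1,\lambda_1,\mu}(t),\lambda)$: since the FPP $\{N_\beta(\cdot,\lambda)\}$ is independent of the TMLLP, conditional expectations given the subordinator are computed by ``freezing'' its values. Write $M(t):=M_{\alpha,\beta_1,\lambda_1,\mu}(t)$. First I would record the moment functionals of the FPP that are needed: from its Mittag-Leffler probability generating function, $\mathbb{E}[N_\beta(u,\lambda)]=\lambda u^\beta/\Gamma(1+\beta)=q_1 u^\beta$ and $\mathbb{E}[N_\beta(u,\lambda)\{N_\beta(u,\lambda)-1\}]=2\lambda^2 u^{2\beta}/\Gamma(2\beta+1)$, hence $\mathbb{E}[N_\beta(u,\lambda)^2]=q_1 u^\beta+2c_2 u^{2\beta}$; and, for $0<u\le v$, the two-time cross moment
\begin{equation*}
\mathbb{E}[N_\beta(u,\lambda)\,N_\beta(v,\lambda)]=q_1 u^\beta+c_1 u^{2\beta}+q_1^2\beta\,v^{2\beta}\,B\!\left(\beta,1+\beta;\tfrac{u}{v}\right),
\end{equation*}
which is the correlation structure of the FPP recorded in the literature on fractional count processes (cf. \cite{samy2018fractional,maheshwari2019fractional}).

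For (i), conditioning on $M(t)$ gives $\mathbb{E}[\mathcal{Q}_{\alpha,\beta_1,\mu}^{\lambda_1,\beta}(t,\lambda)]=q_1\,\mathbb{E}[M(t)^\beta]$, and the asserted large-$t$ equivalent is immediate from Theorem \ref{prop2} with $q=\beta$. For (ii), conditioning again yields $\mathbb{E}[(\mathcal{Q}_{\alpha,\beta_1,\mu}^{\lambda_1,\beta}(t,\lambda))^2]=q_1\,\mathbb{E}[M(t)^\beta]+2c_2\,\mathbb{E}[M(t)^{2\beta}]$, and subtracting $(\mathbb{E}[\mathcal{Q}_{\alpha,\beta_1,\mu}^{\lambda_1,\beta}(t,\lambda)])^2=q_1^2(\mathbb{E}[M(t)^\beta])^2$ reassembles the claimed expression. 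All the moments appearing here are finite because $\mathbb{E}[M(t)^\rho]<\infty$ for every $\rho>0$ (see \cite{kumar2019tempered}).

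The substantive step is (iii). For $0<s\le t$ I would condition on the pair $(M(s),M(t))$ and use that a L\'evy subordinator has nondecreasing paths, so $M(s)\le M(t)$ almost surely; the freezing argument together with the FPP cross-moment formula then gives
\begin{equation*}
\mathbb{E}\big[\mathcal{Q}_{\alpha,\beta_1,\mu}^{\lambda_1,\beta}(s,\lambda)\,\mathcal{Q}_{\alpha,\beta_1,\mu}^{\lambda_1,\beta}(t,\lambda)\,\big|\,M(s),M(t)\big]=q_1 M(s)^\beta+c_1 M(s)^{2\beta}+q_1^2\beta\,M(t)^{2\beta}\,B\!\left(\beta,1+\beta;\tfrac{M(s)}{M(t)}\right).
\end{equation*}
Taking expectations and subtracting $\mathbb{E}[\mathcal{Q}_{\alpha,\beta_1,\mu}^{\lambda_1,\beta}(s,\lambda)]\,\mathbb{E}[\mathcal{Q}_{\alpha,\beta_1,\mu}^{\lambda_1,\beta}(t,\lambda)]=q_1^2\,\mathbb{E}[M(s)^\beta]\,\mathbb{E}[M(t)^\beta]$ produces exactly the four-term expression in (iii); specialising to $s=t$, and using $B(\beta,1+\beta;1)=B(\beta,1+\beta)$ together with $q_1^2\beta B(\beta,1+\beta)=c_1=c_2$, recovers (ii), which is a useful consistency check.

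The step I expect to be the main obstacle is the two-time cross moment of the FPP: since $N_\beta$ is neither Markovian nor a L\'evy process for $\beta<1$, it cannot be read off from increment independence, and it is precisely what introduces the incomplete-beta term carried into (iii). The remaining ingredients — the freezing identity, the almost-sure ordering $M(s)\le M(t)$, and the interchanges of expectation with summation and integration — are routine and are controlled throughout by the finiteness of all moments of $M(t)$ established in \cite{kumar2019tempered}.
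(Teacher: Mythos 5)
Your proposal is correct and follows essentially the same route as the paper: both condition on the subordinator $M_{\alpha,\beta_1,\lambda_1,\mu}$ (using its independence from the FPP and the a.s.\ ordering $M(s)\le M(t)$) and then insert the known mean, variance, and two-time cross-moment formulas of the FPP, the incomplete-beta term coming from the Beghin--Orsingher covariance structure. The only cosmetic difference is that you compute the second moment directly and subtract the squared mean where the paper invokes the law of total variance, and your $s=t$ consistency check via $c_1=c_2$ is a nice (equivalent) addition.
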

\begin{proof}
	We know that (see \cite{laskin2003fractional})
	\begin{equation}\label{ev1}
	\mathbb{E}\left[N_\beta(t, \lambda)\right] = q_1 t^\beta, \;\;\;\;\text{Var}\left [ N_{\beta}(t,\lambda) \right ]= q_1t^{\beta}\left [ 1+q_1t^{\beta}\left ( \frac{\beta B(\beta,1/2)}{2^{2\beta -1}}-1 \right ) \right ].
	\end{equation}
	Also, from \cite{beghin2009fractional}, we have  that
 \begin{equation*}
	\text{Cov}\left[N_\beta(s, \lambda), N_\beta(t, \lambda)\right] = q_1s^\beta +c_1s^{2\beta}+q_1^2\left[\beta t^{2\beta} B(\beta, 1+\beta;s/t)- (st)^{\beta}\right],\;\; 0< s \leq t,
	\end{equation*}
	\begin{equation*}
	\mathbb{E}\left[N_\beta(s, \lambda)N_\beta(t, \lambda)\right] = q_1s^\beta +c_1s^{2\beta}+ q_1^2\left[\beta t^{2\beta} B(\beta, 1+\beta;s/t)\right],
	\end{equation*}
 and
	\begin{equation*}
	\text{Var}\left [ N_{\beta}(t,\lambda) \right ]= q_1t^{\beta}+\frac{(\lambda t^{\beta })^2}{\beta}\left ( \frac{1}{\Gamma (2\beta )}-\frac{1}{\beta \Gamma^2 (\beta)} \right ).
    \end{equation*}
	Using the conditioning argument and with the help of the above quantities for the FPP, we get
    \begin{align*}
   \mathbb{E}\left [ \mathcal{Q}_{\alpha ,\beta _1,\mu }^{\lambda _1,\beta }(t,\lambda) \right ]=&\mathbb{E}\left [ N_{\beta}(M_{\alpha,\beta_1,\lambda_1,\mu}(t)) \right ]=\mathbb{E}\left [ N_{\beta}(M_{\alpha,\beta_1,\lambda_1,\mu}(t))|M_{\alpha,\beta_1,\lambda_1,\mu}(t) \right ]=q_1\mathbb{E}\left [ ({M_{\alpha,\beta_1,\lambda_1,\mu}(t) })^{\beta}  \right ]\\
     	\text{Var}\left [ \mathcal{Q}_{\alpha ,\beta _1,\mu }^{\lambda _1,\beta }(t,\lambda) \right ]=&\text{Var}\left[ \mathbb{E}\left [ N_{\beta}(M_{\alpha,\beta_1,\lambda_1,\mu}(t))|M_{\alpha,\beta_1,\lambda_1,\mu}(t) \right ] \right] +\mathbb{E}\left [\text{Var}\left[N_{\beta}(M_{\alpha,\beta_1,\lambda_1,\mu}(t))|M_{\alpha,\beta_1,\lambda_1,\mu}(t) \right ]\right]\\
     	=&q_1\mathbb{E}\left [ ({M_{\alpha,\beta_1,\lambda_1,\mu}(t) })^{\beta}  \right ]-q_1^2\left [\mathbb{E}\left [ ({M_{\alpha,\beta_1,\lambda_1,\mu}(t) })^{\beta}  \right ]\right]^2+2c_2\mathbb{E}\left [ ({M_{\alpha,\beta_1,\lambda_1,\mu}(t) })^{2\beta}  \right ]
     	\end{align*}
     and	
\	\begin{align*}
     \mathbb{E} \left [ \mathcal{Q}_{\alpha ,\beta _1,\mu }^{\lambda _1,\beta }(s,\lambda)  \mathcal{Q}_{\alpha ,\beta _1,\mu }^{\lambda _1,\beta }(t,\lambda) \right ]
	&=\mathbb{E} \left [\mathbb{E}\left [N_{\beta}(M_{\alpha,\beta_1,\lambda_1,\mu}(s))N_{\beta}(M_{\alpha,\beta_1,\lambda_1,\mu}(t))|M_{\alpha,\beta_1,\lambda_1,\mu}(s),M_{\alpha,\beta_1,\lambda_1,\mu}(t)\right]\right] \\
	&=\mathbb{E}\left[ q_1({M_{\alpha,\beta_1,\lambda_1,\mu}(s) })^{\beta}+c_1({M_{\alpha,\beta_1,\lambda_1,\mu}(s) })^{2\beta}+q_1^2\beta({M_{\alpha,\beta_1,\lambda_1,\mu}(t) })^{2\beta} B\left(\beta,1+\beta;\frac{M_{\alpha,\beta_1,\lambda_1,\mu}(s)}{M_{\alpha,\beta_1,\lambda_1,\mu}(t)}\right)\right].
	\end{align*}
    Therefore, with the help of Part (i) and Part (ii),  we get the desired expression for the Cov$\left [ \mathcal{Q}_{\alpha ,\beta _1,\mu }^{\lambda _1,\beta }(s,\lambda) ,  \mathcal{Q}_{\alpha ,\beta _1,\mu }^{\lambda _1,\beta }(t,\lambda) \right ]$.
    \end{proof}
\begin{remark}
\noindent Let $\{X(t)\}_{t \geq 0}$ be a stochastic process. We say it is overdispersed if Var$[X(t)]-\mathbb{E}[X(t)]>0$ for all $t\geq 0$ (see \cite[p. 72]{cox1966statistical}).
Now, for the process $\left \{ \mathcal{Q}_{\alpha ,\beta _1,\mu }^{\lambda _1,\beta }(t,\lambda)\right \}_{t\geq0}$
\begin{align*}
\text{Var}\left[\mathcal{Q}_{\alpha ,\beta _1,\mu }^{\lambda _1,\beta }(t,\lambda)\right]-\mathbb{E}\left[\mathcal{Q}_{\alpha ,\beta _1,\mu }^{\lambda _1,\beta }(t,\lambda)\right]
&=2c_2\mathbb{E}\left [ ({M_{\alpha,\beta_1,\lambda_1,\mu}(t) })^{2\beta}  \right ]-q_1^2\left [\mathbb{E}\left [ ({M_{\alpha,\beta_1,\lambda_1,\mu}(t) })^{\beta}  \right ]\right]^2\\
&=\frac{\lambda^2}{\beta}\left[\frac{\mathbb{E}\left [ ({M_{\alpha,\beta_1,\lambda_1,\mu}(t) })^{2\beta}  \right ]}{\Gamma(2\beta)}-\frac{\left [\mathbb{E}\left [ ({M_{\alpha,\beta_1,\lambda_1,\mu}(t) })^{\beta}  \right ]\right]^2}{\beta\Gamma^2 (\beta)}\right]\\
&\geq\left(\mathbb{E}\left [ ({M_{\alpha,\beta_1,\lambda_1,\mu}(t) })^{\beta}  \right ]\right)^2\left[\frac{\lambda^2}{\beta}\left(\frac{1}{\Gamma(2\beta)}-\frac{1}{\beta\Gamma^2 (\beta)}\right)\right].
\end{align*}
One can observe that $\frac{\lambda^2}{\beta} \left(\frac{1}{\Gamma(2 \beta)} - \frac{1}{\beta \Gamma^2 (\beta)}\right) > 0$ for $\lambda >0$ and $\beta \in (0,1)$ (see \cite{beghin2014fractional}) and ${\left [\mathbb{E}\left [ ({M_{\alpha,\beta_1,\lambda_1,\mu}(t) })^{\beta}  \right ]\right]^2}\leq{\mathbb{E}\left [ ({M_{\alpha,\beta_1,\lambda_1,\mu}(t) })^{2\beta}  \right ]}$ is true because of Cauchy-Schwarz inequality and 
 shows the overdispersion of the $\left \{ \mathcal{Q}_{\alpha ,\beta _1,\mu }^{\lambda _1,\beta }(t,\lambda)\right \}_{t\geq0}$.
\end{remark}
\subsection{Laplace transform}Let $h(x,t)$ be the pdf of the $\mathcal{E}_\beta\left(M_{\alpha,\beta_1,\lambda_1,\mu}(t)\right)$ and $k_\beta(x,t)$ be the pdf of the inverse stable subordinator $\mathcal{E}_\beta(t)$ with LT $\mathbb{E}[e^{-u\mathcal{E}_\beta(t)}] =E_{\beta,1}^{1}(-ut^\beta)$ (see \cite{meerschaert2013inverse}). Then, the LT of    $\mathcal{E}_\beta\left(M_{\alpha,\beta_1,\lambda_1,\mu}(t)\right)$ can be derived as
\begin{align}\label{eb1}
\mathbb{E}\left[e^{-u \mathcal{E}_\beta\left(M_{\alpha,\beta_1,\lambda_1,\mu}(t)\right)}\right] &= \int_{0}^{\infty} e^{-ux}h(x,t) dx= \int_{0}^{\infty} \int_{0}^{\infty} e^{-ux} k_\beta(x,y) f_{M_{\alpha,\beta_1,\lambda_1,\mu}}(t,y) dy dx \\
&= \int_{0}^{\infty} E_{\beta,1}^{1}(-uy^\beta) f_{M_{\alpha,\beta_1,\lambda_1,\mu}}(t,y) dy\nonumber\\
%	&=  \int_{0}^{\infty}  \sum_{k=0}^{\infty} \frac{(-uy^\beta)^k}{\Gamma(1+\beta k)}  f_{M_{a,b, \mu}}(t,y)dy\nonumber \\
	&=  \sum_{l=0}^{\infty}  \frac{(-u)^l}{\Gamma(1+\beta l)}\int_{0}^{\infty} y^{l\beta} f_{M_{\alpha,\beta_1,\lambda_1,\mu}}(t,y)dy\nonumber \\
&= \sum_{l=0}^{\infty}  \frac{(-u)^l}{\Gamma(1+\beta l)} \mathbb{E}[(M_{\alpha,\beta_1,\lambda_1,\mu}(t))^{l\beta}].
\end{align}
Using the conditioning arguments, we obtain the LT for TSTFNBP as
% It is known that Laplace transform for the MLLP is given by 
% \begin{equation}
% 	\mathbb{E}e^{-uM_{a,b, \mu}(t)} = \left(\frac{\mu}{\mu + u^a}\right)^{bt}.
% \end{equation}
\begin{align*}
\mathbb{E}\left[e^{-u\mathcal{Q}_{\alpha ,\beta _1,\mu }^{\lambda _1,\beta }(t,\lambda)}\right] &=\mathbb{E} \left [\mathbb{E}\left [e^{-uN\left(\mathcal{E}_\beta\left(M_{\alpha,\beta_1,\lambda_1,\mu}(t)\right)\right)}|\mathcal{E}_\beta\left(M_{\alpha,\beta_1,\lambda_1,\mu}(t)\right) \right]\right] \\
 &= \mathbb{E}\left[\mathbb{E}\left[\text{exp}\left(-\lambda \mathcal{E}_\beta\left( M_{\alpha,\beta_1,\lambda_1,\mu}(t)\right)(1-e^{-u})\right)/  \mathcal{E}_\beta\left( M_{\alpha,\beta_1,\lambda_1,\mu}(t)\right)\right]\right]\\
&=  \sum_{l=0}^{\infty}  \frac{(-\lambda(1-e^{-u}))^l}{\Gamma(1+\beta l)} \mathbb{E}[(M_{\alpha,\beta_1,\lambda_1,\mu}(t))^{l\beta}].
\end{align*}
The probability generating function of the process  $\{\mathcal{Q}_{\alpha ,\beta _1,\mu }^{\lambda _1,\beta }(t,\lambda)\}_{t \geq 0}$ can be evaluated using LT and is given by
\begin{equation*}
\mathbb{E}\left[u^{\mathcal{Q}_{\alpha ,\beta _1,\mu }^{\lambda _1,\beta }(t,\lambda)}\right] =  \sum_{l=0}^{\infty}  \frac{(-\lambda(1-u))^l}{\Gamma(1+\beta l)} \mathbb{E}[(M_{\alpha,\beta_1,\lambda_1,\mu}(t))^{l\beta}].
\end{equation*} 
\begin{remark}
\noindent When $\alpha =1,\mu=0$, the LT of TSTFNBP  reduces to  
		\begin{equation*}
		\mathbb{E}\left[e^{-u\mathcal{Q}_{1 ,\beta _1,0 }^{\lambda _1,\beta }(t,\lambda)}\right] =\frac{1}{ \Gamma(\beta_1 t)}\;  _2\psi_1 \left[\frac{-\lambda(1-e^{-u})}{\lambda_1^{\beta}}\; \vline \;\begin{matrix}
\left(1,1\right),& \left(\beta_1 t , \beta\right)\\
(1, \beta)
\end{matrix} \right],
		\end{equation*}
		which is the LT of the FNBP discussed in \cite{samy2018fractional}.
   % \noindent(ii) For $\mu=0$, the LT of TSTFNBP reduces to the LT of GFNBP as disscussed in \cite{soni2024generalized}
\end{remark}
%Here we present L\'{e}vy measure for a particular case $\beta=1$
 Next, we present the L\'{e}vy measure density for the particular case when $\beta =1.$
\subsection{L\'{e}vy measure}
It is to note that when $\beta=1$, the TSTFNBP is identical to  the TSFNBP defined in \cite{maheshwari2023tempered}. We derive here the L\'{e}vy measure density for TSFNBP. 
Using L\'{e}vy density of TMLLP (\ref{ld1}) and the formula  \cite[page 197]{ken1999levy}, the L\'{e}vy measure $\mathcal{D}$ for the process  $\{\mathcal{Q}_{\alpha ,\beta _1,\mu }^{\lambda _1,1 }(t,\lambda)\}_{t\geq 0}$ can be evaluated as
\begin{align}\label{lm56}
\mathcal{D}(k) 
&= \int_{0}^{\infty} \sum_{i=1}^{\infty}p_{\beta}(i/t, \lambda)  \delta_{\{i\}}(k)\pi(t) dt  \nonumber \\
&= \int_{0}^{\infty} \sum_{i=1}^{\infty} \frac{(\lambda t)^i}{i!} e^{-\lambda t} \delta_{\{i\}}(k) \frac{\alpha\beta_1}{t}e^{-\mu t}E_{\alpha, 1}\left[(\mu^{\alpha}-\lambda_1) t^\alpha\right] dt  \nonumber\\
&= \alpha \beta_1\sum_{i=1}^{\infty}  \frac{(\lambda)^i}{i!}\delta_{\{i\}}(k) \int_{0}^{\infty} e^{-(\mu+\lambda) t} t^{i-1}\sum_{j=0}^{\infty }\frac{\left [ (\mu ^{\alpha}-\lambda_1)t^{\alpha }\right ]^{j}}{\Gamma (\alpha j+1)} dt\nonumber \\
&=  \alpha \beta_1\sum_{i=1}^{\infty}  \frac{(\lambda)^i}{i!}\delta_{\{i\}}(k) \sum_{j=0}^{\infty }\frac{(\mu ^{\alpha}-\lambda_1)^{j}}{\Gamma (\alpha j+1)} \int_{0}^{\infty}  e^{-(\mu+\lambda) t}t^{i-1+\alpha j}  dt \nonumber \\
&=  \alpha \beta_1\sum_{i=1}^{\infty}  \frac{(\lambda)^i}{i!}\delta_{\{i\}}(k) \sum_{j=0}^{\infty }\frac{(\mu ^{\alpha}-\lambda_1)^{j}}{\Gamma (\alpha j+1)}\frac{\Gamma(\alpha j+i)}{(\lambda+\mu)^{\alpha j+i}}. 
\end{align}
%Next, we explore the long-range dependence property of the TSTFNBP.
\section{Second-order asymptotic properties and first-passage time}\label{sec:lrd}
In this section, we will discuss the long-range dependence and first-passage time for the TSTFNBP. We first reproduce the definition of the LRD property (see \cite{d2014time, maheshwari2016long, kumar2020fractional}).
\begin{definition}
	Let $0 <s <t,$ let the correlation function Corr$[X(s), X(t)]$ for a stochastic process $\{X(t)\}_{t \geq 0}$ satisfies the following relation
	% \begin{equation*}
	% d_1(s)t^{-d} \leq \text{ Corr}[X(s), X(t)]\leq d_2(s)t^{-d}
	% \end{equation*}
	% for large  $t$, $d >0$, $d_1(s) > 0$ and $d_2(s) > 0.$ Expressly
	\begin{equation*}
	\lim_{t \rightarrow \infty} \frac{ \text{ Corr}[X(s), X(t)]}{t^{-d}} = k(s),
	\end{equation*}
	for some $k(s) >0$ and $d > 0$.
	The process $\{X(t)\}_{t \geq 0}$ exhibits the long-range dependence (LRD) property when $d \in (0,1)$.
\end{definition}
\subsection{Dependence structure of the TSTFNBP}
\begin{lemma}\label{lm22}
	Let $\beta \in (0,1)$ and $0 < s < t, s$ is fixed. Then the following asymptotic expansion holds for a large t.\newline
\noindent (i)\;\; $\mathbb{E}\left[(M_{\alpha,\beta_1,\lambda_1,\mu}(s))^{\beta} (M_{\alpha,\beta_1,\lambda_1,\mu}(t))^{\beta} \right] \sim \mathbb{E}\left[(M_{\alpha,\beta_1,\lambda_1,\mu}(s))^{\beta}\right] \mathbb{E}\left[(M_{\alpha,\beta_1,\lambda_1,\mu}(t-s))^{\beta}\right]$ \\
\noindent (ii)\;\;$\beta \mathbb{E}\left[(M_{\alpha,\beta_1,\lambda_1,\mu}(t))^{2\beta}B\left(\beta, 1+\beta; \frac{M_{\alpha,\beta_1,\lambda_1,\mu}(s)}{M_{\alpha,\beta_1,\lambda_1,\mu}(t)} \right)\right] \sim \mathbb{E}\left[(M_{\alpha,\beta_1,\lambda_1,\mu}(s))^{\beta}\right] \mathbb{E}\left[(M_{\alpha,\beta_1,\lambda_1,\mu}(t-s))^{\beta}\right].$
\end{lemma}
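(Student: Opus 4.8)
The plan is to exploit that $\{M_{\alpha,\beta_1,\lambda_1,\mu}(t)\}_{t\ge 0}$, being a tempered stable subordinator subordinated by an independent gamma subordinator, is itself a L\'evy subordinator and hence has stationary and independent increments; write $M(t):=M_{\alpha,\beta_1,\lambda_1,\mu}(t)$ for brevity. The key structural input is that for fixed $s$ and $t>s$ the increment $M(t)-M(s)$ is independent of $M(s)$ and is distributed as $M(t-s)$. All moments appearing below are finite (Theorem \ref{prop2}; see also \cite{kumar2019tempered}), and since the L\'evy measure in (\ref{ld1}) has infinite mass near the origin the paths are almost surely strictly increasing, so $0<M(s)<M(t)$ almost surely.

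For part (i) I would use the elementary inequalities $b^{\beta}\le (a+b)^{\beta}\le a^{\beta}+b^{\beta}$, valid for $a,b\ge 0$ and $\beta\in(0,1)$ by monotonicity and subadditivity of $x\mapsto x^{\beta}$, taken with $a=M(s)$ and $b=M(t)-M(s)$. Multiplying by $(M(s))^{\beta}\ge 0$, taking expectations, and using independence of $M(s)$ and $M(t)-M(s)$ gives
\begin{equation*}
\mathbb{E}\left[(M(s))^{\beta}\right]\mathbb{E}\left[(M(t-s))^{\beta}\right]\le \mathbb{E}\left[(M(s))^{\beta}(M(t))^{\beta}\right]\le \mathbb{E}\left[(M(s))^{2\beta}\right]+\mathbb{E}\left[(M(s))^{\beta}\right]\mathbb{E}\left[(M(t-s))^{\beta}\right].
\end{equation*}
Since $s$ is fixed, $\mathbb{E}[(M(s))^{2\beta}]$ is a constant while $\mathbb{E}[(M(t-s))^{\beta}]\to\infty$ as $t\to\infty$ by Theorem \ref{prop2}; dividing through by $\mathbb{E}[(M(s))^{\beta}]\mathbb{E}[(M(t-s))^{\beta}]$ and letting $t\to\infty$ yields the claimed equivalence by the squeeze theorem.

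For part (ii) the idea is to sandwich the incomplete beta function and reduce to part (i). For $0<x<1$, since $(1-x)^{\beta}\le(1-u)^{\beta}\le 1$ on $(0,x)$,
\begin{equation*}
\frac{(1-x)^{\beta}x^{\beta}}{\beta}\le B(\beta,1+\beta;x)=\int_0^{x}u^{\beta-1}(1-u)^{\beta}\,du\le\frac{x^{\beta}}{\beta}.
\end{equation*}
Substituting $x=M(s)/M(t)$, multiplying by $\beta\,(M(t))^{2\beta}$, using $(M(t))^{2\beta}(1-M(s)/M(t))^{\beta}(M(s)/M(t))^{\beta}=(M(s))^{\beta}(M(t)-M(s))^{\beta}$ for the lower bound, and taking expectations gives
\begin{equation*}
\mathbb{E}\left[(M(s))^{\beta}(M(t)-M(s))^{\beta}\right]\le \beta\,\mathbb{E}\left[(M(t))^{2\beta}B\left(\beta,1+\beta;\frac{M(s)}{M(t)}\right)\right]\le \mathbb{E}\left[(M(s))^{\beta}(M(t))^{\beta}\right].
\end{equation*}
The left-hand side equals $\mathbb{E}[(M(s))^{\beta}]\,\mathbb{E}[(M(t-s))^{\beta}]$ by stationarity and independence of increments, and the right-hand side is asymptotically the same quantity by part (i); a final squeeze completes the proof.

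The argument is short once the L\'evy structure is exploited, so I do not expect a serious obstacle. The points requiring care are technical: confirming finiteness of the relevant fractional moments (granted by Theorem \ref{prop2}), justifying the decomposition of $M(t)$ into $M(s)$ plus an independent copy of $M(t-s)$, and verifying $\mathbb{E}[(M(t-s))^{\beta}]\to\infty$ rather than staying bounded --- all three following from $\mathbb{E}[(M(t))^{q}]\sim(\alpha\beta_1\mu^{\alpha-1}/\lambda_1)^{q}t^{q}$. The one genuinely new ingredient beyond part (i) is the two-sided estimate on $B(\beta,1+\beta;\,\cdot\,)$ near the origin, which is elementary.
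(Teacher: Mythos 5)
Your argument is correct and is essentially the proof the paper points to: the paper disposes of this lemma by citing Lemma 2 of \cite{maheshwari2016long}, whose proof is exactly your combination of stationary independent increments, the subadditivity bounds $b^{\beta}\le (a+b)^{\beta}\le a^{\beta}+b^{\beta}$, and the two-sided estimate on the incomplete beta function, followed by a squeeze using $\mathbb{E}[(M(t-s))^{\beta}]\to\infty$. You have simply written out in full the argument the authors invoke by reference.
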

\begin{proof}
    \noindent Proof of the following lemma is similar to that of  Lemma 2 in \cite{maheshwari2016long}.
\end{proof}
\noindent We will next proof the LRD property for our process.
\begin{theorem}
	The process TSTFNBP exhibits the LRD property.
\end{theorem}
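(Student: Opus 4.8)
The plan is to establish the long-range dependence property by computing the asymptotic behavior of $\mathrm{Corr}[\mathcal{Q}_{\alpha ,\beta _1,\mu }^{\lambda _1,\beta }(s,\lambda), \mathcal{Q}_{\alpha ,\beta _1,\mu }^{\lambda _1,\beta }(t,\lambda)]$ as $t \to \infty$ with $s$ fixed, and showing the leading term decays like $t^{-\beta}$ with $\beta \in (0,1)$, which places us exactly in the LRD regime of the definition. First I would take the autocovariance expression from Theorem \ref{thm1}(iii) and apply Lemma \ref{lm22}: parts (i) and (ii) of that lemma let me replace the two awkward terms -- the product moment $q_1^2\mathbb{E}[(M(s))^\beta]\mathbb{E}[(M(t))^\beta]$ and the incomplete-beta term $q_1^2\beta\mathbb{E}[(M(t))^{2\beta}B(\beta,1+\beta;M(s)/M(t))]$ -- by asymptotically equivalent quantities. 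Since both are asymptotically $\mathbb{E}[(M(s))^\beta]\mathbb{E}[(M(t-s))^\beta]$ up to the $q_1^2$ factor, and the fourth term comes with a $+$ and the third with a $-$, their difference is $o$ of either one; hence the covariance is dominated, for large $t$, by the surviving lower-order terms $q_1\mathbb{E}[(M(s))^\beta] + c_1\mathbb{E}[(M(s))^{2\beta}] - q_1^2\mathbb{E}[(M(s))^\beta]\mathbb{E}[(M(t))^\beta] + q_1^2\mathbb{E}[(M(s))^\beta]\mathbb{E}[(M(t-s))^\beta]$. Using Theorem \ref{prop2} with $q=\beta$, we have $\mathbb{E}[(M(t))^\beta]\sim (\alpha\beta_1\mu^{\alpha-1}/\lambda_1)^\beta t^\beta$ and likewise $\mathbb{E}[(M(t-s))^\beta]\sim (\alpha\beta_1\mu^{\alpha-1}/\lambda_1)^\beta (t-s)^\beta$; since $t^\beta - (t-s)^\beta \sim \beta s\, t^{\beta-1}$, the two $t$-dependent contributions partially cancel and the $s$-dependent constant terms $q_1\mathbb{E}[(M(s))^\beta] + c_1\mathbb{E}[(M(s))^{2\beta}]$ remain as the leading order. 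Thus $\mathrm{Cov}[\mathcal{Q}(s,\lambda),\mathcal{Q}(t,\lambda)] \to q_1\mathbb{E}[(M(s))^\beta] + c_1\mathbb{E}[(M(s))^{2\beta}] =: K(s) > 0$ as $t\to\infty$.

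Next I would handle the denominator $\sqrt{\mathrm{Var}[\mathcal{Q}(s,\lambda)]\,\mathrm{Var}[\mathcal{Q}(t,\lambda)]}$. The factor $\mathrm{Var}[\mathcal{Q}(s,\lambda)]$ is a fixed positive constant. For $\mathrm{Var}[\mathcal{Q}(t,\lambda)]$, I use Theorem \ref{thm1}(ii): the dominant term as $t\to\infty$ is $2c_2\mathbb{E}[(M(t))^{2\beta}] - q_1^2(\mathbb{E}[(M(t))^\beta])^2$. Applying Theorem \ref{prop2} with $q=2\beta$ and $q=\beta$ gives $\mathbb{E}[(M(t))^{2\beta}]\sim (\alpha\beta_1\mu^{\alpha-1}/\lambda_1)^{2\beta} t^{2\beta}$ and $(\mathbb{E}[(M(t))^\beta])^2 \sim (\alpha\beta_1\mu^{\alpha-1}/\lambda_1)^{2\beta} t^{2\beta}$, both of order $t^{2\beta}$; the coefficient of $t^{2\beta}$ is $(2c_2 - q_1^2)(\alpha\beta_1\mu^{\alpha-1}/\lambda_1)^{2\beta}$, and one checks (as in the overdispersion remark, via $2c_2 - q_1^2 = \lambda^2(\tfrac{1}{\Gamma(2\beta)} - \tfrac{1}{\beta\Gamma^2(\beta)})/\beta > 0$ using the cited inequality from \cite{beghin2014fractional}) that this constant is strictly positive. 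Hence $\mathrm{Var}[\mathcal{Q}(t,\lambda)] \sim C\, t^{2\beta}$ for some $C>0$, so $\sqrt{\mathrm{Var}[\mathcal{Q}(t,\lambda)]}\sim \sqrt{C}\, t^{\beta}$.

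Combining numerator and denominator,
\begin{equation*}
\mathrm{Corr}[\mathcal{Q}(s,\lambda),\mathcal{Q}(t,\lambda)] \sim \frac{K(s)}{\sqrt{\mathrm{Var}[\mathcal{Q}(s,\lambda)]\,C}}\; t^{-\beta} \quad\text{as } t\to\infty,
\end{equation*}
so that $\lim_{t\to\infty} \mathrm{Corr}[\mathcal{Q}(s,\lambda),\mathcal{Q}(t,\lambda)]/t^{-\beta} = k(s)$ for the explicit positive constant $k(s) = K(s)/\sqrt{C\,\mathrm{Var}[\mathcal{Q}(s,\lambda)]}$, and since $\beta \in (0,1)$ by hypothesis, the process exhibits the LRD property per the definition above. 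I would then state $d = \beta$ explicitly.

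The main obstacle is the cancellation bookkeeping in the numerator: one must verify carefully that the two $t$-growing terms in the covariance ($-q_1^2\mathbb{E}[(M(s))^\beta]\mathbb{E}[(M(t))^\beta]$ from Theorem \ref{thm1}(iii) and the $+q_1^2\mathbb{E}[(M(s))^\beta]\mathbb{E}[(M(t-s))^\beta]$ arising after applying Lemma \ref{lm22}) genuinely have the \emph{same} leading coefficient so that their difference is $O(t^{\beta-1}) = o(1)$ and does not interfere with the constant leading term $K(s)$; this hinges on the precise asymptotic constant in Theorem \ref{prop2} being the same for the $t$ and $t-s$ arguments, which it is since the leading coefficient there does not involve additive shifts. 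A secondary subtlety is justifying that the error terms absorbed into the "$\sim$" in Lemma \ref{lm22} are uniformly negligible compared to $K(s)$; I would note this follows from the lemma being an honest asymptotic equivalence. Everything else is routine substitution of Theorem \ref{prop2} and Theorem \ref{thm1}.
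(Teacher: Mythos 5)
Your proposal is correct and follows essentially the same route as the paper: apply Lemma \ref{lm22}(ii) to the incomplete-beta term in Theorem \ref{thm1}(iii), use Theorem \ref{prop2} together with $t^{\beta}-(t-s)^{\beta}\sim \beta s t^{\beta-1}$ to show the covariance tends to the constant $q_1\mathbb{E}[(M_{\alpha,\beta_1,\lambda_1,\mu}(s))^{\beta}]+c_1\mathbb{E}[(M_{\alpha,\beta_1,\lambda_1,\mu}(s))^{2\beta}]$, and show $\mathrm{Var}[\mathcal{Q}_{\alpha,\beta_1,\mu}^{\lambda_1,\beta}(t,\lambda)]\sim d_1 t^{2\beta}$, giving a correlation decay of order $t^{-\beta}$ with $\beta\in(0,1)$. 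Your explicit check that $2c_2-q_1^2>0$ (via the overdispersion inequality) is a small refinement the paper leaves implicit, but the argument is the same.
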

\begin{proof} Using Theorem \ref{prop2} and with the help of Lemma \ref{lm22}(ii), the asymptotic behavior of the covariance is 
	\begin{align*}
	  \text{Cov}\left [ \mathcal{Q}_{\alpha ,\beta _1,\mu }^{\lambda _1,\beta }(s,\lambda) ,  \mathcal{Q}_{\alpha ,\beta _1,\mu }^{\lambda _1,\beta }(t,\lambda) \right ] &\sim q_1\mathbb{E}\left [ ({M_{\alpha,\beta_1,\lambda_1,\mu}(s) })^{\beta}  \right ]+c_1\mathbb{E}\left [ ({M_{\alpha,\beta_1,\lambda_1,\mu}(s) })^{2\beta}  \right ]\\ & \;\;\;\;-q_1^2\mathbb{E}\left [ ({M_{\alpha,\beta_1,\lambda_1,\mu}(s) })^{\beta}  \right ]\left [\mathbb{E}\left [ ({M_{\alpha,\beta_1,\lambda_1,\mu}(t) })^{\beta}  \right ] - \mathbb{E}\left [ ({M_{\alpha,\beta_1,\lambda_1,\mu}(t-s) })^{\beta}  \right ]\right ]\\
	 &\sim q_1\mathbb{E}\left [ ({M_{\alpha,\beta_1,\lambda_1,\mu}(s) })^{\beta}  \right ]+c_1\mathbb{E}\left [ ({M_{\alpha,\beta_1,\lambda_1,\mu}(s) })^{2\beta} \right ]\\ &\;\;\;\; -q_1^2\mathbb{E}\left [ ({M_{\alpha,\beta_1,\lambda_1,\mu}(s) })^{\beta}  \right ]\left[ \left( \frac{\alpha\beta_1\mu^{\alpha-1}t}{\lambda_1}\right)^{\beta} -\left(\frac{\alpha\beta_1\mu^{\alpha-1}(t-s)}{\lambda_1}\right)^{\beta}\right ] \\
	 & \sim q_1\mathbb{E}\left [ ({M_{\alpha,\beta_1,\lambda_1,\mu}(s) })^{\beta}  \right ]+c_1\mathbb{E}\left [ ({M_{\alpha,\beta_1,\lambda_1,\mu}(s) })^{2\beta} \right ]\;\;\; (\text{since}\;\; t^{\beta} - (t-s)^{\beta} \sim \beta st^{\beta -1} \text{for large value of}\; t).
		\end{align*}
		Also, the asymptotic behavior of the variance is as follows
		\begin{align*}
		\text{Var}\left [ \mathcal{Q}_{\alpha ,\beta _1,\mu }^{\lambda _1,\beta }(t,\lambda) \right ]
		&\sim q_1\left(\frac{\alpha\beta_1\mu^{\alpha-1}t}{\lambda_1}\right)^{\beta}-q_1^2\left(\frac{\alpha\beta_1\mu^{\alpha-1}t}{\lambda_1}\right)^{2\beta}+2c_2\left(\frac{\alpha\beta_1\mu^{\alpha-1}t}{\lambda_1}\right)^{2\beta}\\
&=t^{2\beta}\left[q_1\left(\frac{\alpha\beta_1\mu^{\alpha-1}}{\lambda_1 t}\right)^{\beta}-q_1^2\left(\frac{\alpha\beta_1\mu^{\alpha-1}}{\lambda_1}\right)^{2\beta}+2c_2\left(\frac{\alpha\beta_1\mu^{\alpha-1}}{\lambda_1}\right)^{2\beta}\right]\\
		&\sim t^{2\beta}\left(\frac{\alpha\beta_1\mu^{\alpha-1}}{\lambda_1}\right)^{2\beta}(2c_2-q_1^2)\\
		&\sim t^{2\beta} d_1,
			\end{align*}
where $d_1=\left(\frac{\alpha\beta_1\mu^{\alpha-1}}{\lambda_1}\right)^{2\beta}(2c_2-q_1^2)$.
%	 The ML L\'{e}vy process has the asymptotic expansion as
%	\begin{equation*}
%	\mathbb{E}[M_{\alpha,\rho, \mu}^{\beta}(t)] \sim \frac{\Gamma(1-\frac{\beta}{\alpha})}{\lambda^{\beta/\alpha}\Gamma(1-\beta)}(\rho t)^{\beta/\alpha} \;\; \text{and} \;\; 	\mathbb{E}[M_{\alpha,\rho, \mu}^{2\beta}(t)] \sim \frac{\Gamma(1-\frac{2\beta}{\alpha})}{\lambda^{2\beta/\alpha}\Gamma(1-2\beta)}(\rho t)^{2\beta/\alpha}.
%	\end{equation*}
Therefore, the correlation function can be computed as
	\begin{align*}
	\text{Corr}\left [ \mathcal{Q}_{\alpha ,\beta _1,\mu }^{\lambda _1,\beta }(s,\lambda) ,  \mathcal{Q}_{\alpha ,\beta _1,\mu }^{\lambda _1,\beta }(t,\lambda) \right ] &= \frac{\text{Cov}\left [ \mathcal{Q}_{\alpha ,\beta _1,\mu }^{\lambda _1,\beta }(s,\lambda) ,  \mathcal{Q}_{\alpha ,\beta _1,\mu }^{\lambda _1,\beta }(t,\lambda) \right ]}{\sqrt{\text{Var}\left [ \mathcal{Q}_{\alpha ,\beta _1,\mu }^{\lambda _1,\beta }(s,\lambda)\right ]}\sqrt{\text{Var}\left [\mathcal{Q}_{\alpha ,\beta _1,\mu }^{\lambda _1,\beta }(t,\lambda) \right ]}} \sim \frac{q_1\mathbb{E}\left [ ({M_{\alpha,\beta_1,\lambda_1,\mu}(s) })^{\beta}  \right ]+c_1\mathbb{E}\left [ ({M_{\alpha,\beta_1,\lambda_1,\mu}(s) })^{2\beta} \right ]}{\sqrt{t^{2\beta}d_1}\sqrt{\text{Var}\left [\mathcal{Q}_{\alpha ,\beta _1,\mu }^{\lambda _1,\beta }(t,\lambda) \right ]} }\\
	&= t^{-\beta}\left[\frac{q_1\mathbb{E}\left [ ({M_{\alpha,\beta_1,\lambda_1,\mu}(s) })^{\beta}  \right ]+c_1\mathbb{E}\left [ ({M_{\alpha,\beta_1,\lambda_1,\mu}(s) })^{2\beta} \right ]}{\sqrt{d_1\text{Var}\left [\mathcal{Q}_{\alpha ,\beta _1,\mu }^{\lambda _1,\beta }(t,\lambda) \right ]} }  \right].
	\end{align*}
	Hence, for $0< \beta < 1$ and the decaying power $t^{-\beta}$, the process shows the LRD property.
\end{proof}
\subsection{First-passage time distribution}
Finally, we look at the first-passage time distribution of the TSTFNBP. For a stochastic process, it is the time during which a process reaches a certain threshold for the first time. %Several fields, such as finance, physics, epidemiology, and biology, have extensively used its distribution.

Let $\mathcal{T}_k$ be the time of first upcrossing of level $k$ and is defined as
%\begin{equation*}
$	\mathcal{T}_k := \inf\{t\geq0 :\mathcal{Q}_{\alpha ,\beta _1,\mu }^{\lambda _1,\beta }(t,\lambda)\geq k \}.$ Then the survival function $Pr\{\mathcal{T}_k>t\}$ can be derived as
%$\end{equation*}
\begin{align*}
	Pr\{\mathcal{T}_k>t\} &= Pr\{\mathcal{Q}_{\alpha ,\beta _1,\mu }^{\lambda _1,\beta }(t,\lambda)<k \}
	= \sum_{n=0}^{k-1}Pr\{\mathcal{Q}_{\alpha ,\beta _1,\mu }^{\lambda _1,\beta }(t,\lambda)=n \}\\
	&=\sum_{n=0}^{k-1}\int_{0}^{\infty }p_{\beta }({n}/{y},\lambda ) f_{M_{\alpha,\beta_1,\lambda_1,\mu}(t,y)}dy\\
	&=\sum_{n=0}^{k-1}\int_{0}^{\infty }\frac{(\lambda y^{\beta})^n}{n!} \sum_{l=0}^{\infty} \frac{(n+l)!}{l!}\frac{(-\lambda y^{\beta})^l}{\Gamma(\beta(l+n)+1)}f_{M_{\alpha,\beta_1,\lambda_1,\mu}(t,y)}dy\\
	&=\sum_{n=0}^{k-1}\frac{\lambda^n}{n!} \sum_{l=0}^{\infty} \frac{(n+l)!}{l!}\frac{(-\lambda )^l}{\Gamma(\beta(l+n)+1)}\int_{0}^{\infty }y^{\beta(n+l)}f_{M_{\alpha,\beta_1,\lambda_1,\mu}(t,y)}dy\\
	&=\sum_{n=0}^{k-1}\frac{\lambda^n}{n!} \sum_{l=0}^{\infty} \frac{(n+l)!}{l!}\frac{(-\lambda )^l}{\Gamma(\beta(l+n)+1)}\mathbb{E}[(M_{\alpha,\beta_1,\lambda_1,\mu}(t))^{\beta(n+l)}].
\end{align*}
Furthermore, the distribution of $\mathcal{T}_k$ can be written as 
\begin{align*}
	Pr\{\mathcal{T}_k<t\} &= Pr\{\mathcal{Q}_{\alpha ,\beta _1,\mu }^{\lambda _1,\beta }(t,\lambda)\geq k \}	
	=\sum_{n=k}^{\infty} Pr\{\mathcal{Q}_{\alpha ,\beta _1,\mu }^{\lambda _1,\beta }(t,\lambda)=n \}	\\
	&=\sum_{n=k}^{\infty}\frac{\lambda^n}{n!} \sum_{l=0}^{\infty} \frac{(n+l)!}{l!}\frac{(-\lambda )^l}{\Gamma(\beta(l+n)+1)}\mathbb{E}[(M_{\alpha,\beta_1,\lambda_1,\mu}(t))^{\beta(n+l)}].
\end{align*}
Therefore, the density function $\mathcal{P}(n,t)= Pr\{\mathcal{T}_k \in dt \}/dt$ is
\begin{align*}
	\mathcal{P}(n,t)&= \frac{d}{dt}\sum_{n=k}^{\infty}\frac{\lambda^n}{n!} \sum_{l=0}^{\infty} \frac{(n+l)!}{l!}\frac{(-\lambda )^l}{\Gamma(\beta(l+n)+1)}\mathbb{E}[(M_{\alpha,\beta_1,\lambda_1,\mu}(t))^{\beta(n+l)}]\\
	&=\frac{d}{dt}\left ( 1-\sum_{n=0}^{k-1}\frac{\lambda^n}{n!} \sum_{l=0}^{\infty} \frac{(n+l)!}{l!}\frac{(-\lambda )^l}{\Gamma(\beta(l+n)+1)}\mathbb{E}[(M_{\alpha,\beta_1,\lambda_1,\mu}(t))^{\beta(n+l)}] \right )\\
	&= -\frac{d}{dt}\sum_{n=0}^{k-1}\frac{\lambda^n}{n!} \sum_{l=0}^{\infty} \frac{(n+l)!}{l!}\frac{(-\lambda )^l}{\Gamma(\beta(l+n)+1)}\mathbb{E}\left[(M_{\alpha,\beta_1,\lambda_1,\mu}(t))^{\beta(n+l)}\right].
\end{align*}

\end{document}